\documentclass[10pt,a4paper,oneside]{amsart}
\usepackage{amssymb,color,amsmath,amsfonts,pb-diagram,todonotes}
\usepackage[all]{xy}
\usepackage{pb-xy}
\usepackage[mathscr]{euscript}
\usepackage{url}
\topmargin=-.5cm\textheight=23cm\textwidth=15cm
\oddsidemargin=-0.30cm \evensidemargin=-0.50cm

\usepackage[bookmarks]{hyperref}

\def\Z{\mathbb{Z}}
\def\Q{\mathbb{Q}}

\def\otimesover#1{\mathbin{\mathop{\otimes}_{#1}}}
\numberwithin{equation}{section}

\makeatother

\newtheorem{theorem}{Theorem}[section]

\newtheorem{lemma}[theorem]{Lemma}

\theoremstyle{definition}
\newtheorem{definition}[theorem]{Definition}

\newtheorem{remark}[theorem]{Remark}
\newtheorem{notation}[theorem]{Notation}

\newtheorem*{bs}{The Bieri-Strebel Theorem}
\newtheorem*{finitelygen}{Theorem~\ref{thm:finitelygen}}
\newtheorem*{finpres}{Theorem~\ref{thm:finpres}}
\newtheorem*{parafree}{Theorem~\ref{thm:parafree}}
\newtheorem*{finitelygenerated}{Theorem~\ref{thm:finitelygenerated}}
\newtheorem*{Gn}{Theorem~\ref{thm:Gn}}
\newtheorem*{tele4}{Theorem~\ref{thm:telescope}.(4)}

\newtheorem*{wrAfinite}{Theorem~\ref{thm:wreath}, (1) and (2)}
\newtheorem*{wrAinfinite}{Theorem~\ref{thm:wreath}, (3) and (4)}

\newenvironment{customthm}[1]
    {\innercustomthm}
  {\endinnercustomthm}

\def\g{\gamma}
\def\M{\mathscr{M}}
\def\R{{\bf R}}

\begin{document}

\title{Localization, metabelian groups, and the isomorphism problem}

\author{Gilbert Baumslag}
\address{CAISS and Dept. of Computer Science ,
City College of New York, Convent Avenue and 138th Street, New
York, NY 10031} \email{gilbert.baumslag@gmail.com}
\thanks{The research of the first author is supported by Grant CNS 111765 and the work done here
was initially caried out at IHES, whose hospitality is gratefully acknowleged}

\author{Roman Mikhailov}
\address{Chebyshev Laboratory, St. Petersburg State University, 14th Line, 29b,
Saint Petersburg, 199178 Russia and
St. Petersburg Department of the Steklov Mathematical Institute, Fontanka 27, Saint Petersburg, 191023 Russia.}
\email{rmikhailov@mail.ru}
\urladdr{http://chebyshev.spb.ru/roman-mikhailov}

\thanks{The research of the second author is supported by the Chebyshev Laboratory  (Department of Mathematics and Mechanics, St.
Petersburg State University)  under RF Government grant
11.G34.31.0026 and by JSC "Gazprom Neft", as well as by the  RF
Presidential  grant MD-381.2014.1.}

\author{Kent E. Orr}
\address{Dept. of Mathematics, Indiana University, Bloomington IN 47405}
\email{korr@indiana.edu}
\thanks{The third author thanks the
Simons Foundation, Grant 209082, for their support.}

\begin{abstract}
If $G$ and $H$ are  finitely generated, residually nilpotent
metabelian groups, $H$ is termed para-$G$ if there is a
homomorphism of $G$ into $H$ which induces an isomorphism between
the corresponding terms of their lower central quotient groups. We
prove that this is an equivalence relation. It is a much coarser
relation than isomorphism, our ultimate concern. It turns out that
many of the groups in a given equivalence class share various
properties including finite presentability. There are examples,
such as the lamplighter group,  where an equivalence class
consists of a single isomorphism class and others where this is
not the case. We  give several examples where we solve the
Isomorphism Problem. We prove also that the sequence of torsion-free ranks of
 the lower central quotients of  a  finitely
generated metabelian
group is computable. In a future paper we plan on proving that there is
an algorithm to compute the numerator and denominator of  the
rational Poincar\'e series of a finitely generated metabelian group and will carry out this
computation in a number of examples,  which may shed a tiny bit of light on the
Isomorphism Problem.  Our proofs use
localization,  class field theory and some constructive commutative algebra.

\end{abstract}

\maketitle

\section{Introduction}\label{section:intro}
\subsection{Preliminary remarks}
In a recent paper \cite{BMO1}, entitled ``A new look at finitely
generated metabelian groups'',  we outlined  a number of ideas for
exploring finitely generated metabelian groups. These ideas arise
from several seemingly different sources - algebraic geometry,
algebraic number theory, combinatorial group theory and
constructive commutative algebra. Here we will provide  some of
the details  briefly sketched in that paper and discuss and describe
some additional theorems that our first paper has given rise to.

We have chosen in
this paper to take a purely combinatorial
 view of our work. In the third
paper of this series we will take a more homological, functorial
approach using localization of groups, providing an alternative approach
to the material which we hope will lead to further understanding.

\subsection{Finitely generated metabelian groups}
A group $G$ is metabelian if its derived group, $A$, is abelian. In the first in a series of
fundamental papers going back to 1954, Philip Hall \cite{Hall1} observed that $A$ can be viewed as a
module over the integral group ring $\Z[H]$ of the factor derived group $H=G/A$, where $H$ acts on $A$
by conjugation. In the event that $G$ is finitely generated, $H$ is a finitely generated abelian group and,
as Hall noted, $A$ is  a finitely generated module over the finitely generated commutative ring $\Z[H]$.
It follows from Hilbert's basis theorem that $G$ satisfies the maximal condition for normal subgroups
and hence that the number of isomorphism classes of finitely generated metabelian groups is countable. As a
consequence every finitely generated metabelian group, viewed in the category of metabelian groups, has a finite description termed a {\em preferred presentation}  in  ~\cite{BCR} (see also~\cite{Lennox-Robinson}) defined
as follows: \\

\noindent A  {\em preferred presentation} of a finitely generated metabelian group $G$ is a presentation
which takes the form

\[
G=\langle g_1,\dots,g_t\ |\ R_1\cup R_2\rangle
\]
where
\begin{enumerate}
\item $R_1$ is a finite set of words of the form
\[
w=\prod_{1\leq i\leq j \leq   t}  [g_i,g_j]^{u_{ij}}\,
\]
and we use the usual notation $[x,y]$ for $x^{-1}y^{-1}xy$, $y^x$ for $x^{-1}yx$ and the $u_{ij}$ are words of the form
$g_1^{m_1}\dots g_t^{m_t}$;
\item $R_2$ is a finite set of words $uw$ where  $u$ has the form $g_1^{m_1}\dots g_t^{m_t}$ and
\[
w=\prod_{1\leq i\leq j\leq t }[g_i,g_j]^{v_{ij}}\, ,
\]
with $v_{ij}$ of the form $g_1^{n_1}\dots g_t^{n_t}$.\\
\end{enumerate}

\noindent Thus the words in $R_1$,  together with the addition of all commutators  $[x,y]$ where $x$ and $y$ take
the form $[g_i,g_j]^{u_{ij}}$, are a regular  presentation of $A$ while those in $R_2$ read modulo $A$,
yield a finite presentation of $G/A$. It follows that there is a recursive enumeration of all (preferred) presentations
of finitely generated metabelian groups. These presentations are the finite descriptions  needed in any discussion
of algorithms involving finitely generated metabelian groups.

\subsection{Algorithmic problems about finitely generated metabelian groups}
Both the word and conjugacy problems about finitely generated metabelian groups
are solvable.  The first is essentially due to Hall~\cite{Hall1} and the second to Noskov \cite{Noskov}.
The Isomorphism Problem
however remains almost untouched, except for the theorem of Groves and Miller~\cite{GrovesMiller}:
{\it The Isomorphism Problem for finitely generated free metabelian groups, i.e., the factor groups of
finitely generated free groups by their second derived groups, is solvable.}
In other words if we write down a preferred presentation of a finitely generated free metabelian
group $G$  and recursively enumerate all preferred presentations of finitely generated metabelian groups,
then the subset consisting of those presentations of groups isomorphic to $G$ is recursive. Another  way of putting
this is that there is an algorithm to decide whether or not any preferred presentation defines a group isomorphic
to $G$.

There are many other solvable algorithmic problems  about finitely generated metabelian groups:

\begin{enumerate}
\item There is an algorithm to find a preferred presentation of a finitely generated subgroup of a finitely
generated metabelian group.

\item There is an algorithm to determine if a finitely generated metabelian group is torsion-free.

\item There is an algorithm to find a module presentation of the derived group of a finitely generated metabelian group.

\item There is an algorithm to decide if a finitely generated metabelian group is residually nilpotent.

\item There is an algorithm to find the center of a finitely generated metabelian group.

\item There is an algorithm to find the centralizer of a finitely generated subgroup in a finitely generated metabelian group.

\end{enumerate}
We recommend Lennox and Robinson~\cite{Lennox-Robinson} as an excellent general reference as well as Baumslag, Cannonito, and Robinson~\cite{BCR} and the papers of Seidenberg~\cite{Sei1} and~\cite{Sei2}.  The specific algorithms above can be found, respectively, in~\cite[Page 185]{Lennox-Robinson},~\cite[Cors. 4.4, 3.1, 9.2, Thms.~ 3.5, 6.1]{BCR}

\subsection{The Isomorphism Problem}
Remeslennikov~\cite{Remeslennikov} has proved that every finitely
generated metabelian group has a verbal subgroup of finite index which is residually
nilpotent. So the Isomorphism Problem for finitely generated metabelian groups can be
broken down into the Isomorphism Problem for finitely generated residually nilpotent metabelian
groups, the study of the finite metabelian subgroups of their automorphism groups and the  study
of the finite extensions that come into play. This suggests that one approach to
the Isomorphism Problem is to focus attention on the class $\M$ of finitely generated, residually nilpotent
metabelian groups.

\begin{definition}  We term a group in the class $\M$ of finitely generated, residually nilpotent, metabelian groups  an {\em $\M$-group.}
\end{definition}

Since the preferred presentations
of such $\M$-groups are not finite, in order to take advantage of the fact that they are residually nilpotent
and the rich algorithmic properties of finitely presented nilpotent groups, we need to prove that
 the lower central sequences of $\M$-groups are computable. Hence the lower central sequences of $\M$-groups provide us with a computable set of invariants of $\M$-groups. This suggests that we focus on the following
\begin{definition}
Two groups $G$ and $H$ have the {\em same lower central sequences} if $G/\gamma_n(G) \cong H/\gamma_n(H)$ for every $n$.
\end{definition}
\noindent Here  $\gamma_n(X)$ denotes the
$n^{th}$ term of the lower central series of  the group $X$.

Some of the properties of $\M$-groups can be encapsulated in a construction introduced by
J. P. Levine \cite{Levine1}. Levine called this an {\em algebraic closure of the group.}  We restrict Levine's group closure to the class of $\M$-groups, and use an alternate but equivalent definition in this case.  We call this special case of Levine's group closure the {\em Telescope of the group $G$}, as this name suggests the structure of the group closure when considering $\M$-groups.  Nonetheless, Levine was aware of the telescoping structure of his group closure, at least for those  $\M$-groups which are semi-direct products.  The Telescope and it's applications will be discussed in detail in \S\ref{section:main} where we will formulate some of our results, some of which were reported in our previous paper \cite{BMO1}.
We also consider the better known and more studied pro-nilpotent completion of a group $G$, which we denote $\widehat{G}$. The Telescope of $G$ lies within $\widehat{G}$. In the event that the abelianization of $G$ is finitely generated (in particular for $\M$-groups), $\widehat{G}$ has the same lower central sequences as $G$~\cite[Theorem~13.3]{Bousfield}. If a residually nilpotent group $G$ is not nilpotent, $\widehat{G}$ is uncountable. Nonetheless in the case where $G$ is an $\M$-group, we shall prove that $\widehat{G}$ does share an important and interesting property with $G$. We shall also discuss this further in \S\ref{section:main}.

\subsection{Acknowledgements}  We thank James F. Davis for helpful discussions regarding number theory.

\section{Our main results}\label{section:main}
Recall that two groups $G$ and $H$ have the same lower central sequences if there is a sequence of isomorphisms
\[
\phi_n: G/\gamma_n(G) \longrightarrow H/\gamma_n(H)\, (n=1,2,\dots, n, \dots)
\]
between their lower central quotient groups. We have not been able to prove that there is
a homomorphism $\phi:G \longrightarrow H$ which induces such a sequence of isomorphisms
between their lower central sequences.

\begin{definition} Let $G$ and $H$ be groups.  We say that
$H$ is para-$G$ if there is a homomorphism of $G$ into $H$ which induces isomorphisms between the
corresponding quotients of  their lower central series.
\end{definition}
 It is this relationship
that we will  explore in some detail here.

\subsection{Para-equivalence}
We begin with  the following  theorem:

\begin{finitelygen}
Let $G$ and $H$ be residually nilpotent metabelian groups.
If $H$ is finitely generated and if $H$ is para-$G$ then $G$ is also finitely generated.

That is, there exists a para-$G$ $\M$-group only if $G$ is an $\M$-group.
\end{finitelygen}

\noindent Theorem~\ref{thm:finitelygen} is an  indication  that there is a connection between groups with the same lower central
sequences and their structure. A more important connection is contained in
W. Magnus'  fundamental paper  \cite{Magnus} in 1935 where he proved the following

\begin{theorem}[Magnus]\label{thm:Magnus}
If $\phi$ is a homomorphism of a residually nilpotent group $G$ into a group $H$ which induces  isomorphisms
between the respective terms of their lower central sequences, then $\phi$ is a monomorphism.
\end{theorem}

The relation of being para-$G$ is much stronger than one might suspect, as one sees from the following theorem:

\begin{tele4}
 Let $G$ and $H$ be $\M$-groups. Then $G$ is para-$H$ if and only
 if  $H$ is para-$G$.
 \end{tele4}

The above theorem is part of our main result, our Telescope Theorem, Theorem~\ref{thm:telescope},
 which we will discuss and prove in \S\ref{section:telescopeproof}.

It follows that
this property, para-$G$, is an equivalence relation on the class $\M$.

\begin{definition} If $H$ is para-$G$ and $G$ is para-$H$ we say $G$ and $H$ are {\em para-equivalent.}
\end{definition}

That is, our Telescope Theorem implies that the relation of being para-$G$ is an equivalence relation when restricted to $\M$-groups.

We will construct in \S\ref{section:examples} examples
 of para-equivalence classes for $\M$-groups which consist of more than a single isomorphism class. Notice that Theorem~\ref{thm:Magnus} and~\ref{thm:telescope}.(4) together imply that if the $\M$-groups $G$ and $H$ are para-equivalent
 then $G$ is isomorphic to a subgroup of $H$ and $H$ is isomorphic to a subgroup of
 $G$. So para-equivalence can be viewed
as a coarse form of isomorphism and can be compared with isoclinism, an equivalence
relation that Philip Hall introduced in an attempt to classify finite p-groups.   Another
consequence of the Telescope Theorem is the following

\begin{finpres}\label{thm:finpres}
Suppose that two $\M$-groups are para-equivalent. Then either both of them are finitely presented
or neither of them is.
\end{finpres}

The conclusion of Theorem 6.1 does not hold without restriction, since
Bridson and Reid~\cite{BR} have recently constructed examples of finitely generated residually nilpotent groups with the same lower central sequences
and with the following properties:
\begin{enumerate}
\item There is a homomorphism of one of the groups into another which induces isomorphisms between their
lower central sequences;
\item one of the groups is finitely presented whereas the other is not;
\item one of the groups has finitely generated second homology group whereas the other does not.
\end{enumerate}
Their work follows on our
ongoing and earlier work~\cite{BMO1} and~\cite{BR} where a closer connection is established between certain groups with the same lower central
sequences.

Another  interesting consequence of  Theorem~\ref{thm:telescope}.(4) involves the subgroups of free metabelian groups:

\begin{parafree}
Let $F$ be a finitely generated free metabelian group and suppose that $G$ is  an $\M$-group with the same lower central sequences as $F$. Then $G$ is isomorphic to a subgroup of $F$.
\end{parafree}

Since there exist a wide variety of $\M$-groups with the same lower central sequences as a free
metabelian group, it follows that the subgroup structure of finitely generated free metabelian groups is extremely complicated.

\subsection{Pro-nilpotent completions}
It is not hard to see that
if a residually nilpotent group is torsion-free,
then so too is its pro-nilpotent
 completion. As previously mentioned, the pro-nilpotent completion of an $\M$-group,
$G$, has the same lower
central sequence as  $G$. It would be interesting
to explore other properties of a residually
nilpotent groups which are inherited by their pro-nilpotent
completions. We give one example of a theorem of this kind:

\begin{finitelygenerated}
Let the $\M$-group $G$ be polycyclic.
Then $\widehat{G}$ is locally polycyclic, i.e., its finitely generated
subgroups are polycyclic.
\end{finitelygenerated}

\noindent Although we have not done so, in all likelihood our proof
of Theorem~\ref{thm:finitelygenerated} will carry
over to polycyclic groups in general.

\subsection{The Isomorphism Problem and Poincar\'e series}
 We recall that if  $G$ is a group and  $r_n=r_n(G)$ is the torsion-free rank of
 $\gamma_n(G)/\gamma_{n+1}(G)$.
then we term
\[
P(G)=\sum_{n=1}^{\infty}r_nx^n
\]
the {\em rational Poincar\'e series} of $G$. Such series have generally been
studied in connection with graded
modules over graded commutative rings. The Poincar\'e series of graded modules are rational functions, see e.g.,
Atiyah and MacDonald \cite{AtiyahMacDonald}. Baumslag first defined and investigated the Poincar\'e series of finitely generated residually torsion-free nilpotent metabelian groups and showed that this is a rational function in~\cite{Baumslag4} (see  also Groves and Wilson \cite{GrovesWilson}).
As a small contribution to the isomorphism  problem for finitely generated metabelian groups, we add parts $(2)$ and $(3)$ of the theorem below to the first author's prior result $(1)$, where $(3)$ can be viewed as an addendum to the theorem of Groves and Miller~\cite{GrovesMiller}:
\begin{customthm}{7.3}\label{poincare}
Let $G$  be a finitely generated metabelian  group.
Then  the following hold:
\begin{enumerate}
\item The rational Poincar\'e series  $P(G)$ of $G$ is a rational function.
\item There is an algorithm to compute the series $P(G)$.
\item The quotient of the two polynomials which is the Poincar\'e series of a free metabelian group
expressed as rational function is computable.
\end{enumerate}
\end{customthm}

\subsection{More on the Isomorphism Problem}
As already noted, Theorem~\ref{thm:telescope} (4),  provides a coarse classification called para-equivalence
 of certain finitely generated
 metabelian groups.  It points
the way to an approach to  the Isomorphism Problem and
gives rise to a number of questions.
For example if the para-equivalence class of a given group is a singleton,
can one solve the Isomorphism Problem for that group? In \S\ref{section:examples} we will
construct some examples of  groups which lie
in a single equivalence class and solve the Isomorphism Problem for
these groups. Furthermore,
even if the equivalence class
of a given group is not a singleton, we shall show that the Isomorphism Problem is
sometimes solvable for such a group. We
record here some samples of these kinds of results, which we will discuss
more fully in \S\ref{section:examples}. We also gather together in \S\ref{section:singletons} some results and
approaches to the Isomorphism Problem
which make use of ideal theory and class field theory to distinguish
some metabelian groups from
one another.  Although Grunewald and Segal \cite{Segal2} have solved the Isomorphism Problem for
polycyclic groups,
this approach is of independent interest and will be discussed
in a further paper in this series.

\subsection{Examples}
First we give two families of $\M$-groups where each para-equivalence class contains a unique group up to isomorphism, and for which the Isomorphism Problem is solvable.

\begin{Gn}
Let $G_n=\langle a,t\ |\ t^{-1}at=a^n\rangle,\  n \neq 2.$ Then
the following hold:
\begin{enumerate}
\item $G_n$ is residually nilpotent;
\item any $\M$-group with the same lower central sequence as  $G_n$ is isomorphic to $G_n$;
\item the Isomorphism Problem is solvable for each $G_n$.
\end{enumerate}
\end{Gn}

In order to formulate our next two theorems, we use wreath products, which we
will define in \S\ref{section:examples}.  These two theorems are combined in \S\ref{section:examples} into Theorem~\ref{thm:wreath}.

\begin{wrAfinite}\label{thm:uniquewreath}
Let $W$ be the wreath product of an abelian group $A$ of prime order and an infinite cyclic group.Then
\begin{enumerate}
\item $W$ is residually nilpotent;
\item any $\M$-group with the same
lower central sequences as $W$ is isomorphic to $W$;
\item the Isomorphism Problem is solvable for $W$.
\end{enumerate}
\end{wrAfinite}
 
It is worth noting that this theorem includes the lamplighter group,
the wreath product of a group of order
two and an infinite cyclic group. Theorem~\ref{thm:wreath}, (1) and (2), 
is capable of  considerable generalization; however
because this will take us too far astray from our current work,
we prefer to leave its formulation and proof  to another time.

Our next example involves the wreath product $W=\langle
a\rangle\wr \langle t\rangle$ of the infinite cyclic group $A$  on
$a$ by the infinite cyclic group $T$ on $t$. So $W$ is generated
by $a$ and $t$, the conjugates of $a$ by the different powers of
$t$ freely generate a free abelian group $B$ and $W=B \rtimes T$.
We then have the following

\begin{wrAinfinite}\label{thm:ideals}
Let $W=A  \wr T$. Then
\begin{enumerate}
\item $W$ is  residually-torsion free nilpotent;
\item the subgroup $V$ of $W$ generated by $a^{-1}(a^2)^t$, $a^2a^{-t}$, and $t$ has the same
lower central sequences as $W$;
\item $W \ncong V$.
\item the Isomorphism Problem is solvable for $W$.
\end{enumerate}
\end{wrAinfinite}

As a further example we will construct two polycyclic, residually nilpotent,
metabelian $\M$-groups with the same lower central
sequences which are not isomorphic.
We will not describe these
groups here. They are constructed in \S\ref{section:examples} as infinite cyclic extensions
of the additive groups of rings
of integers of algebraic number fields with class numbers at least two.
The argument used to prove
that the groups involved are not isomorphic require that
certain ideals are not principal, an idea used in part (iii) of the proof of Theorem~\ref{thm:uniquewreath} as well.

\section{The arrangement of  the rest of  this paper}\label{section:arrange}

Before describing the main contents of this paper we will list some of the notation and
standard definitions that will be used throughout.

\subsection{Definitions and notation}
Let $G$ be a group and let $x_1, x_2, \dots$ be elements of $G$.
We denote the commutator $x_1^{-1}x_2^{-1}x_1x_2$  by  $[x_1,x_2]$
and define, for $n>0$,
\[
[x_1,\cdots,x_{n+1}] = [[x_1,\cdots,x_n],x_{n+1}].
\]
If  $H$ and $K$ are subgroups of $G$, we define
\[
[H,K]=gp([h,k]\mid h \in H, k \in K)
\]
Its subgroup $[G,G]$ is termed the {\em derived group}. $G$ is
{\em metabelian} if its derived group is  abelian. The {\it lower
central series}
\[
 G = \gamma_1(G) \geq \gamma_2(G) \cdots
 \]
of $G$ is defined inductively by setting
\[
\gamma_{n+1}(G) = [\gamma_n(G),G]
\]
and the sequence
\[
G/\gamma_2(G), G/\gamma_3(G),\dots, G/\gamma_n(G),\dots
\]
is called {\em the lower central sequence of} $G$. A group
 $G$ is {\em residually nilpotent} if
\[
\bigcap_{n=1}^\infty\gamma_n(G) = 1.
\]

The rest of the paper is arranged as follows.

\S4 will deal with localization of rings and modules, needed in the formulation
and proof of our main theorem, the {\em Telescope Theorem}, Theorem~\ref{thm:telescope}.

We start \S\ref{section:telescopeproof} with the proof of Theorem~\ref{thm:finitelygen}. The rest of \S\ref{section:telescopeproof}
will be devoted to  a  formulation of  Theorem ~\ref{thm:telescope}
together with a number of
results about the telescope of a finitely generated residually nilpotent
metabelian group which will be needed in this paper.

In \S\ref{section:conseq} we will discuss a number of consequences of our Telescope Theorem, namely
Theorems~\ref{thm:finpres}, \ref{thm:parapolycylic} and \ref{thm:parafree}.

In addition to our proof of Theorem~\ref{thm:finitelygenerated},  \S\ref{section:completion} we will briefly discuss some possible
implications of the algorithmic nature of the computation of the Poincar\'e series
involved.

In \S9 we give a number of examples of groups which are completely determined
by their lower central sequences and solve the Isomorphism Problem for
a few finitely generated metabelian groups. The use of the ideal theory needed to
distinguish  some of the groups constructed and the class field theory that comes into
play will also be briefly discussed there. This aspect of our work will be dealt with in
some detail in the third of this series of papers devoted to
 finitely generated metabelian groups.

\section{Preparations for the  telescope theorem}\label{section:telescope}
We term our main result of this paper the {\em Telescope Theorem.}

The {\em telescope of a metabelian group} is a type of group localization, or algebraic closure, whose purpose, from our view, is to turn para-equivalences into equivalences.  The original construction arose from a radically different context in knot theory through work of J. P. Levine~\cite{Levine1,Levine2,Levine3}.  We encourage the reader to investigate Levine's beautiful and powerful constructions independently of this work, and we acknowledge our debt.

When restricting to metabelian groups, Levine's construction has a particularly useful formulation which we investigate here as our definition of the group telescope.  In particular, we define the group  telescope using the classical constructions of ring and module localization, that is, the result of adjoining inverses to  elements in a commutative ring or module.  This classical localization, in turn, extends the construction of a field from an integral domain making use of ``fractions''.

It was the first author who originally proposed employing module localization to examine the Isomorphism Problem for metabelian groups at an NSF funded conference at City College of New York, in March 2011, entitled {\em Finitely Presented Solvable Groups.}  Algebraic geometry motivated this approach. Thus, ideas arising from algebraic geometry and knot theory unexpectedly merge in our exploration of metabelian groups.

\subsection{Localization of rings and modules}
The localizations referred to above
are respectively
\begin{itemize}
\item  the construction of the
ring of fractions $S^{-1}R$  of a unitary commutative ring
$R$  with respect to a multiplicatively closed subset  $S$ of
 $R$ containing the identity element $1$ of $R$

 and

\item a related construction, the module of
fractions $S^{-1}M$ of an
$R$-module $M$ with respect to such a multiplicatively closed
set $S$.
\end{itemize}
We will sometimes refer to these constructions respectively as
rings of fractions or modules of fractions or simply as localizations.
 Our discussion
will follow closely that of Atiyah and MacDonald \cite{AtiyahMacDonald}. We
leave most details and proofs to the reader and refer
simply to the cited reference.

We begin first with the construction of the ring $S^{-1}R$
 which is defined to be the set of
the equivalence classes of elements $(a,s) \in R\times S$ subject to
the equivalence relation
\[
(a,s) \sim (b,t) \, \rm{if\,  there\,  exists}\,  u\in S\,  \rm{such\,  that}
\, (at-bs)u=0.
\]
We denote the equivalence class of $(a,s)\in S^{-1}R$ by
 $\frac{a}{s}$ or, at times, by $a/s$.
$S^{-1}R$ can be turned into a unitary commutative ring in the obvious way:
\[
\frac{a}{s}+\frac{b}{t}=\frac{at+bs}{st}, \qquad \qquad
 \frac{a}{s}\frac{b}{t}=\frac{ab}{st}.
\]
The element $\frac{s}{1}$ is invertible in $S^{-1}R$ with inverse $\frac{1}{s}$.
Notice that $S^{-1}R$ is again
a commutative unitary  ring.

There is an analogous construction  to $S^{-1}R$ where $R$ is replaced
by an $R$-module $M$ and the equivalence relation defined on $R\times S$
is replaced by  an equivalence relation $\sim$ on
$M\times S$ as follows:
\[
(a,s)\sim (b,t) \,\, {\rm if\,\,  there\,\,  exists\, }\,  u\in S\,\, {\rm{such\,
\, that}\,\, (at-bs)u=0}.
\]
The set of equivalence classes of $M\times S$ is denoted by $S^{-1}M$
and we denote the equivalence class of $(a,s)$ by $\frac{a}{s}$ or
$a/s$. $S^{-1}M$
is then turned into an $S^{-1}R$ module in the obvious way by defining
$a/s\cdot r/t=ar/st$. If we now fix $s\in S$ and
consider the $R$-module $M_s=\{\frac{a}{s}\mid a\in M\}$, then the mapping
$\mu_s:a \mapsto \frac{a}{s}$ is monic and maps $M$ isomorphically
onto $M_s \leq S^{-1}M$ provided that $as\neq 0$
for every $a\in M, a\neq 0$. If this condition is satisfied we say that
$S$ does not contain any zero divisors of $M$. In particular if
 $S$ does not contain any zero
divisors of $M$,
then the mapping $a \mapsto a/1$ is monic and we can identify
each element $a \in M$ with $a/1 \in S^{-1}M$.
 If $\alpha:M \longrightarrow N$ is a homomorphism of the $R$-module $M$
into the $R$-module $N$,  then it gives rise to an $S^{-1}R$-module
homomorphism $S^{-1}\alpha: S^{-1}M \longrightarrow S^{-1}N$ defined
by $S^{-1}\alpha: \frac{a}{s} \mapsto \frac{\alpha a}{s}$. Then it follows
that $S^{-1}(\beta\alpha)=(S^{-1}\beta)(S^{-1}\alpha)$.

We will also
denote $S^{-1}R$ by $R_S$ and $S^{-1}M$ by $M_S$ and if $\phi$ is a
homomorphism of the $R$-module $M$ into the $R$-module $N$, we will
denote $S^{-1}\phi$ by $\phi_S$.

\begin{lemma}\label{lemma:flat}
Suppose $M$, $N$ and $P$ are $R$-modules, $R$ a commutative unitary ring. Then the following hold.
\begin{enumerate}
\item If the sequence
\[
0 \to M\xrightarrow{\alpha} N \xrightarrow{\beta}P \to 0
\]
is exact, so too is
\[
0 \to M_S\xrightarrow{\alpha_S}N_S \xrightarrow{\beta_S} P_S \to 0.
\]

\item If $F$ is a free $R$-module with basis $X$, then $F_S$ is a
free $R_S$ module with basis $\{x/1\mid x \in X\}$.
\end{enumerate}
\end{lemma}

The proof of Lemma 4.1 is well known, straightforward, and  omitted.

We now consider the special case where $R=\Z[H]$ is the integral group
ring of the abelian group $H$ and $S=1+I$, where $I$ is the augmentation
ideal of $R$, i.e., the ideal consisting of those elements of $R$
with coefficient sum 0. The proof of the following lemma is
also  straightforward and will be omitted.

\begin{lemma}\label{lemma:locfacts}
Let $M$ and $N$ be $R=\Z[H]$-modules.
Then the following hold.
\begin{enumerate}
\item  $(MI)_S = M_SI$.

\item $M_SI^k = (MI^k)_S$.

\item If $M$ is a submodule of the $R$-module $N$, then
$\left(\frac{N}{M}\right)_S\cong \frac{N_S}{M_S}.$
\end{enumerate}
\end{lemma}

We come next to an important lemma which is needed to verify
one of the properties of our Telescope Theorem.

\begin{lemma}\label{lemma:loc}
Let $H$ be an abelian group and let $M$ and $N$ be $R=\Z[H]$-modules, where $H$ is an abelian group.
Furthermore, let $\sigma$ be a homomorphism of $M$ to $N$. If
$\sigma$ induces a homomorphism from  $M$ onto $N/NI$ and if $N$
is finitely generated, then $\sigma_S$ maps $M_S$ onto $N_S$.
\end{lemma}

\begin{proof}
Let $b_1, \cdots b_k$ be a finite set of generators of $N$ and and
let $F$ be the free $R=\Z[H]$-module with basis $x_1,\dots,x_k$. Then
the homomorphism $\nu$ from $F$ to $N$, defined by sending $x_i$ to $b_i$
for each $i$, is onto $N$. Now $\sigma$ induces a homomorphism from $M$
onto $N/NI$. Consequently, for each $i$ there exists an element $a_i \in M$
such that $\sigma(a_i)=b_i+\Sigma_{j=1}^kb_jr_{ij}$ where the $r_{ij} \in I$.

Define a homomorphism $\lambda$ of $F$ to $F$ in the usual way,
by a $k\times k$ matrix $\Lambda=(\lambda(i,j))$  where
$\lambda(i,j)= \delta_{ij}+r_{ij}$. Finally, define a homomorphism $\rho$  from
$F$ to $M$ by mapping $x_i$ to $a_i$.
It then follows that the following diagram is commutative.
\[
\begin{diagram}\dgARROWLENGTH=1em
\node{F} \arrow{e,t}{\lambda}\arrow{s,r}{\rho} \node{F}\arrow{s,r}{\nu}\\
\node{M} \arrow{e,t}{\sigma}\node{N}
\end{diagram}
\]
Consequently on localizing each of the terms in  the
above diagram, we get another commutative diagram
\[
\begin{diagram}\dgARROWLENGTH=1em
\node{F_S} \arrow{e,t}{\lambda_S}\arrow{s,r}{\rho_S} \node{F_S}\arrow{s,r}{\nu_S}\\
\node{M_S} \arrow{e,t}{\sigma_S}\node{N_S}
\end{diagram}
\]

The determinant of the matrix $\lambda_S$ is invertible since it belongs
to $S$. Consequently $\lambda_S$ is an isomorphism. It follows that
$ \nu_S\lambda_S$ is onto $N_S$, and so the commutativity of the above
diagram implies  that $\sigma_S$ is onto, as claimed.
\end{proof}

\section{The formulation and proof of the telescope theorem}\label{section:telescopeproof}
We give an alternative to our original proof of the Telescope Theorem.  We thank C.F. Miller III for the key Lemma~\ref{lemma:key} used in this newer proof.

Before proceeding to the formulation and proof of the
telescope theorem, we prove
the following theorem.

\begin{theorem}\label{thm:finitelygen}
Let $G$ and $H$ be residually nilpotent metabelian groups.
If $H$ is finitely generated and if there is a
homomorphism $\phi$ of $G$ into $H$ which induces an
isomorphism between their lower central sequences
then $G$ is also finitely generated.
\end{theorem}

\begin{proof}
Let $A$ be the derived group of $G$ and $B$ the derived group of $H$.
Then $\phi$ maps $A$ into $B$ and induces a homomorphism of $G$ onto
$H/B$. So we can find a finite subset $X$ of $G$ whose image $Y$ under $\phi$
generates $H$ modulo $B$. Since $A$ is invariant under conjugation by the
elements of $X$, it follows that $\phi(A)$ is invariant under conjugation
by the elements of $Y$. Since $\phi$ maps $A$ into $B$ it also follows that
$\phi(A)$ is invariant under conjugation by the elements of $H$, i.e.,
that $\phi(A)$ is normal in $H$, and therefore an $H/B$-module.  Since $\Z[H/B]$ is Noetherian, $\phi(A)$ is the normal closure of finitely many elements in $H$. By Theorem~\ref{thm:Magnus}, $\phi$ is a monomorphism.
Therefore  $A$ is the normal closure
in $G$ of finitely many elements. Since $G/A$ is finitely generated, so too is
$G$.
\end{proof}

\subsection{Preliminaries leading to the proof of the Telescope Theorem}
 Let $G$ be an $\M$-group with
 derived group $A$, $S=1+I$, where $I$ is the augmentation ideal of the integral
 group ring $R$ of $H=G/[G,G]$.  $A_S$ is a $G$-module and hence, as before,  $G$
 acts on the localization $A_S$ of $A$. We can then
 form the semi-direct product $P= G \ltimes A_S$ of $A_S$ by $G$.
So, denoting the elements
of $P$ by pairs $(g,a/s)$ (and noticing  the peculiarities of
this notation), $g$ is
 here an element in a multiplicatively
written group $G$ while $a/s$ is an element in a $G$-module now endowed with an
 additive notation.
 Let $K=\{(a^{-1},a/1)  \mid  a\in A \}$.  Then $K$ is a normal subgroup
of $P$. We now define what we term the {\it Telescope} of $G$:

\begin{definition}
 The {\em Telescope  of $G$} is the factor group $G_S=P/K$.
\end{definition}

We need some further preparation before we can formulate in \S5.3,
our main theorem, the Telescope Theorem, Theorem~\ref{thm:telescope}.

\subsection{Some important subgroups of $G_S$}
We will need to identify
some of the subgroups of $G_S$. First we have

\begin{lemma}\label{lemma:GS subgroups}
Let $G$ bs an $\M$-group with derived group $A$
and factor derived group $H=G/A$. Let $R$ be the integral group ring of $H$ and let
$S=1+I$, where $I$ is the augmentation ideal of $R$. Let $G_S$ be the telescope of $G$.
Then
\begin{enumerate}
\item the mapping $\phi_1:G \longrightarrow G_S$ defined by
\[
\phi_1: g \mapsto (g,0/1)\, (g \in G)
\]
is a monomorphism;
\item the mapping $\alpha:A_S \longrightarrow G_S$ defined by
\[
\alpha:a/s \mapsto (1,a/s)\, (a \in A, s\in S)
\]
is  a monomorphism.
\item $G/A\cong G_S/A_S$.
\end{enumerate}
\end{lemma}

\begin{proof}
We begin by noting that if $a \in A$, $s \in S$, then $s=1-\alpha $ with $\alpha \in I$. So
 $as =a-a\alpha$. We note that if $a \neq 0$, then $as \neq 0$ for otherwise
\[
a=a\alpha=a{\alpha}^2=\cdots=a{\alpha}^n=\cdots .
\]
Consequently $a\in \cap_{n=1}^{\infty}A (I^n)$. However $A(I^{n})=\gamma_{n+2}(G)$ which implies
that $a=0$ since $G$ is residually nilpotent. Thus no element of $S$ is a
zero-divisor of $A$. It follows, in particular, that $A_s \cong A$
for every $s \in S$ and in particular that $A_1 \cong A$. So we can,
as needed subsequently, identify $a$ with $a/1$ and thence we identify
$A$ with $A_1$.

The proofs of (1) and (2) of Lemma~\ref{lemma:GS subgroups} are straightforward. We need only
 note that if $(g,0/1)\in K$
then $(g,0/1)=(a^{-1},a/1)$ which implies, interpreting the first
$a$ as an
 element in $G$, that
$a=1$ and hence that $g=1$.

To prove (3) it is  worth-while to first clarify some of the identifications that one
might take for granted working with $G_S$. We
 identify $(g,0/1)$ with $g \in G$ and $a/s$ with
 $(1,a/s)$. Then this identifies $G$ with a subgroup of $G_S$ and $A_S$ with an abelian normal subgroup of $G_S$.
Using these identifications, we see that $G_S=GA_S$, $G\cap A_S=A_1=A=A/1$, and $G_S/A_S \cong G/A$.
\end{proof}

Now recall that for
each $s \in S$, \,  $A_s=\{a/s\mid a\in A\}$.
$A_s$ in $A_S$ is invariant under conjugation by elements of $G$. Viewing, once again, $G$ and $A_s$ as subgroups of $G_S$, we define $G_s$ by the equation $G_s = GA_s$. Thus, the telescope,  $G_S$, is an ascending
union $G_S=\bigcup_{s\in S}G_s$ of the subgroups $G_s$.

In order to prepare for the proof of the  telescope theorem we will need a number of lemmas.

\subsection{A key lemma}
Let $G$ be a metabelian group.

\begin{lemma}[Miller III]\label{lemma:key}
Let $y_1,\dots,y_n$ be elements of $G$ and let $\theta$ be the mapping from $G$ into $G$ defined by
\[
x \mapsto x[x,y_1]\cdots [x,y_n]\, (x\in G).
\]
Then $\theta$ is an endomorphism  of $G$ which
induces the identity on $G/[G,G]$.
\end{lemma}

\begin{proof}
Suppose that $x$,  $y$  and $z$ are elements of $G$. Then
$[xz,y]=[x,y]^z [z,y]$. Hence, noting that commutators commute in $G$, we find that
\begin{align*}
\theta(xz)
& =xz[xz,y_1][xz,y_2]\cdots [xz,y_n]\\
&=xz[x,y_1]^z[z,y_1]\cdots [x,y_n]^z[z,y_n]\\
&=xz([x,y_1][x,y_2]\cdots [x,y_n])^z [z,y_1][z,y_2]\cdots [z,y_n]\\
&=x[x,y_1][x,y_2]\cdots [x,y_n]z[z,y_1][z,y_2]\cdots [z,y_n]\\
&=\theta(x)\theta(z)
\end{align*}
\end{proof}

Notice that the definition of $\theta$ does not depend on the
 ordering of the set $\{y_1,\dots,y_n\}$.

We now have the following important consequences of Lemma~\ref{lemma:key}.

\begin{lemma}\label{lemma:keyconseq}
Let $G$ be an $\M$-group and let $A$ be the derived
group of $G$. Furthermore,
let $I$ be the augmentation ideal of the integral group ring $R=\Z[H]$ of $H=G/A$. Let $S=1+I$ and let  $t\in S$. Then
$t=1+\alpha$, where $\alpha \in I$ and so having chosen a set $Y$ of representatives of the
cosets of $A$ in $G$, $\alpha$ can, disregarding order, be written uniquely as
$\alpha=(-1+y_1A)+\cdots+(-1+y_nA)\, (y_j\in Y)$. Then the mapping $\theta_t$ defined by
\[
x \mapsto x[x,y_1]\dots [x,y_n]\, (x\in G_t)
\]
is an endomorphism of
$G_t$ which is monic,  maps $A_t$ isomorphically onto $A$, and
$G_t$ isomorphically onto $G$. Hence
\[
G_t \cong G,\quad A \gamma_2(G_t) = A_t\cong A \cong \gamma_2(G).
\]
\end{lemma}

\begin{proof}
The proof here is most easily given by using multiplicative notation for $R$-modules. Thus we
use the notation $(a/t)^r$ in place of $(a/t)r$. Moreover, if $r=c_1g_1+\cdots+c_ng_n$, then
$(a/t)^r=(a^{g_1})^{c_1}\cdots(a^{g_n})^{c_n}$. Notice that
\[
\theta_t(a/t)=a/t\cdot [a/t,y_1]\cdots [a/t,y_n]=(a/t)^{1+(-1+y_1)+\cdots+(-1+y_n)}=(a/t)^t=a.
\]
Now every element $x\in G_t$ can be written in the form $x=g\cdot (a/t)$, where $g \in G$ and
$a\in A$. Then
\[
\theta_t(g\cdot a/t)=\theta_t(g)\theta_t(a/t)= \theta_t(g)\cdot (at/t)=\theta_t(g)\cdot a.
\]
Since $\theta_t$ maps $G$ into $G$
it follows that $\theta_t$ maps $G_t$ into $G$.
Moreover modulo $A_t$, $\theta_t$ is the identity and is monic on $A_t$.
So $\theta_t$ is an isomorphism as
claimed. The rest of the lemma follows easily.
\end{proof}

Notice that the definition of the mapping $\theta_t$
 depends on the choice of the
set of representatives of $A$ in $G$.

We now, adopting the notation used above, are
in a position to prove the following lemma.

\begin{lemma}\label{Lemma:structure thm}
Let

\[
S=\{s_1,s_2,\dots,s_n,\dots\},
\]
and
\[
T=\{t_1, t_2,\dots, t_n, \dots\} \ \mbox{where for each $n$}, \  t_n=s_1\dots s_n.
\]
If we denote $G = G_0$ and $G_i = G_{t_i}$, then the following hold:
\begin{enumerate}
\item $G=G_0 \leq G_1\leq G_2\leq \dots \leq G_S$ is an ascending
sequence of subgroups of $G_S$;
\item $G_i \cong G$ for all $i$;
\item $G_S =\bigcup_{i=1}^{\infty} G_i$;
\item $\gamma_2(G_S)=A_S$;
\item $A/AI \cong  A_S/A_SI$;
\item $A/AI^n\cong A_S/A_SI^n$;
\item $\g_{n+2}(G_S)= A_SI^n=(AI^n)_S$ , $\gamma_{n+2}(G)=AI^n$,  for every $n$;
\item  $ \gamma_2(G)/\gamma_{n+2}(G)\cong \gamma_2(G_S)/\gamma_{n+2}(G_S)$;
\item $G_S$ is residually nilpotent.
\end{enumerate}
\end{lemma}

\begin{proof} We prove the above statements sequentially.
\begin{enumerate}
\item This follows immediately from the very definitions of the $G_i$ since
\[
A_{i+1}=A_{t_{i+1}}=A_{t_i \cdot s_{i+1}}\geq A_{t_i} = A_i.
\]

\item This is a special case of Lemma~\ref{lemma:keyconseq}.\\

\item This follows from Lemma~\ref{lemma:GS subgroups}, as discussed in the paragraph subsequent to that Lemma.\\

\item Recall that the isomorphism $\theta_t \colon G_t \to G$ of Lemma 5.5 maps  $A_t$ to the derived subgroup $A$ of $G$.  It follows that the inverse of $\theta_t$ maps $A$
to the derived subgroup of $G_t$ and so  $\gamma_2(G_t) = A_t$. Consequently
\[
\gamma_2(G_S)=\gamma_2(\cup_{t_i}G_{t_i})=\cup_{t_i}\gamma_2(G_{t_i})=
\cup(A_{t_i})=A_S.
\]

\item We have the following sequence of isomorphisms.
\[
\begin{split}
A_S/A_SI\cong A_S\otimes_{Z[H]}\Z \cong (A \otimes_{\Z[H]}\Z[H]_S)&\otimes_{\Z[H]} \Z\\
\cong  A \otimes_{\Z[H]} (\Z[H]_S  \otimes_{\Z[H]} &\Z)\cong A\otimes_{\Z[H]}\Z \cong A/AI.
\end{split}
\]

\item The case $k=1$ is taken care of by (5). Suppose inductively that
$A_S/A_SI^k\cong A/AI^k$ for $k = n$.  We have a commutative diagram
where the second row is exact by the flatness of localization, Lemma~\ref{lemma:flat}.
\[
\begin{diagram}\dgARROWLENGTH=1em
\node{0} \arrow{e} \node{\frac{AI^{n}}{AI^{n+1}}} \arrow{e} \arrow{s,r}{\cong} \node{\frac{A}{AI^{n+1}}} \arrow{e}\arrow{s} \node{\frac{A}{AI^{n}}} \arrow{s,r}{\cong}\arrow{e} \node{0}\\
\node{0} \arrow{e} \node{\left(\frac{AI^{n}}{AI^{n+1}}\right)_S} \arrow{e} \node{\left(\frac{A}{AI^{n+1}}\right)_S} \arrow{e} \node{\left(\frac{A}{AI^{n}}\right)_S} \arrow{e} \node{0}
\end{diagram}.
\]
The right hand vertical homomorphism is an isomorphism by the inductive hypothesis and Lemma~\ref{lemma:locfacts}. The following shows the left hand vertical homomorphism is also an isomorphism:
{\small
\[
\frac{AI^{k}}{AI^{k+1}} \cong AI^{k} \otimesover{\Z [H]} \Z \cong
(AI^{k})_S\otimesover{\Z [H]} \Z \cong (A_S)I^{k} \otimesover{\Z
[H]} \Z \cong \frac{(A_S)I^{k}}{(A_S)I^{k+1}} \cong
\left(\frac{AI^k}{AI^{k+1}}\right)_S.
\]
}
By the $5$-Lemma, the result follows.\\

\item By statement (4), already proven, $\gamma_2(G_S) = A_S.$  By induction
\[
\gamma_{n+2}(G_S) = [G_S, \gamma_{n+1}(G_S)] = [G_S, A_SI^{n-1}] = [G, A_SI^{n-1}] = A_SI^n = (AI^n)_S.
\]
The middle equality follows since the derived subgroups of $G$ and $G_S$ act trivially on $A_S$, and by statement (4) which implies that $G/A \cong G_S/A_S$.\\
\item We have $A=\gamma_2(G)$, $AI^n=\gamma_{n+2}(G)$, $A_S=\gamma_2(A_S)$,
$A_SI^n=\gamma_{n+2}(G_S)$ and so by (6)
$$\gamma_2(G)/\gamma_{n+2}(G)=A/AI^n\cong A_S/A_SI^n=\gamma_2(G_S)/\gamma_{n+2}(G_S)
.$$
\item Since $G$ is residually nilpotent the submodules $AI^n = \gamma_{n+2}(G)$
have trivial intersection and so it follows from (7) that $G_S$ is residually nilpotent.
\end{enumerate}
\end{proof}

Before we state and prove our Telescope Theorem, we establish some notation.

\begin{notation}
For an $\M$-group $G$, we denote the telescope of $G$ by $\tau(G)$.
\end{notation}

We are now in a position to formulate and prove our Telescope Theorem,
much of which has already been proved in Lemma~\ref{Lemma:structure thm}.

\begin{theorem}[The Telescope Theorem]\label{thm:telescope}
Let $G$ and $H$ be $\M$-groups. Then the
following hold:
\begin{enumerate}
\item $\tau(G)$ is an ascending union of subgroups isomorphic to $G$.
\item The telescope $\tau(G)$ is para-$G$.
\item The group $H$ is para-$G$ if and only if $\tau(G) \cong \tau(H)$.
\item If $H$ is para-$G$, then $G$ is para-$H$.
\item If $H$ is para-$G$, then $H$ isomorphic to a subgroup of $G$ and
$G$ is isomorphic to a subgroup of $H$.
\end{enumerate}
\end{theorem}

\begin{proof} We prove these statements in sequence.
\begin{enumerate}
\item We already know from~Lemma~\ref{Lemma:structure thm}(1) that $\tau(G)$ is an ascending union of the $G_i$,
each of which is isomorphic to $G$.\\

\item  Let $\phi$ be the inclusion of $G$ into $\tau(G) = G_S$ and let $\phi_n$ be the induced homomorphism
of $G/\gamma_n(G)$ into $\tau(G)/\gamma_n(\tau(G))$.
Observe  that $\gamma_2(\tau(G))= A_S$ by Lemma~\ref{Lemma:structure thm}, and recall that
$\gamma_2(G)=A$.
Consider the following  sequence of equalities and isomorphisms
$$G/\gamma_2(G)= G/A\cong GA_S/A_S=GA_S/\gamma_2(G_S)=\tau(G)/\gamma_2(\tau(G)).$$
The composition of these maps is $\phi_2$. It follows that if $X$ is a finite set of generators of $G$,
then $\{x\gamma_2(G_S)\mid x \in X\}$ generates $G_S/\gamma_2(G_S)$. Now in a nilpotent group,
any set of elements which generates it modulo its derived group generates it modulo every term of its
lower central series. Hence
$$\phi_{n+2}: G/\gamma_{n+2}(G) \longrightarrow G_S/\gamma_{n+2}(G_S)$$
is onto.
Notice that  since $G$ is finitely generated
so too are all of its quotients and subgroups. Moreover if $H$ is any finitely generated nilpotent group and
if $H/L \cong H$ then $L=1$ by Magnus' Theorem ~\ref{thm:Magnus}. It follows that any homomorphism of a finitely generated
nilpotent group onto an isomorphic nilpotent group is itself an isomorphism. Now $\phi_2$ induces a homomorphism of $G/\gamma_2(G)$
onto the group $G_S/\gamma_2(G_S)$ which we have already seen is isomorphic to $G/\gamma_2(G)$.
So $\phi_2$ is an isomorphism.
$\phi_n$ induces a homomorphism $\theta_n$
of $A/AI^n=\gamma_2(G)/\gamma_{n+2}(G)$ to
 $A_S/A_SI^n=\gamma_2(G_S)/\gamma_{n+2}(G_S)$, which by Lemma 5.6 (6) is
isomorphic to $A/AI^n$. Each
of these groups is finitely generated since subgroups and quotient groups of finitely
generated nilpotent groups are finitely generated and hence $\theta_n$ is  an isomorphism
and so
$$\phi_n: G/\gamma_n(G)\longrightarrow  \tau(G)/\gamma_n(\tau(G))$$
is an isomorphism for all $n$ which proves (2).
\item Suppose $H$ is para-$G$.  Then there is a homomorphism $\phi$ from
$G$ into $H$ which induces isomorphisms of the corresponding
terms of the lower central sequences of $G$ and $H$. In particular,
$G/[G,G]$ and $H/[H,H]$ are isomorphic finitely generated abelian groups, which we identify
and denote by $Q$.

So if we put $\g_2(G) = A$ and $\g_2(H)=B$ then it follows that $\phi$ induces a
homomorphism of the $Q$-module $A$ into the $Q$-module $B$.
Moreover $\g_3(G)=AI$ and $\g_3(H)=BI$. So
$\phi$ induces a homomorphism of $A/AI$ onto $B/BI$. By
Lemma~\ref{lemma:loc},  $\phi$ induces an
epimorphism of $A_S$ onto $B_S$. It follows that $\phi$
induces a  homomorphism $\phi_S$ of $G_S$ onto
$H_S$. But $G_S$ and $H_S$ have the same lower central
sequences and they are residually nilpotent by Lemma~\ref{Lemma:structure thm}(8).
Therefore, we have proved
that $\tau(G)\cong \tau(H)$.

 Choose any isomorphism $\phi \colon \tau(G) \to \tau(H)$.
By Lemma~\ref{Lemma:structure thm}(1) and (3), $\tau(H)$ is a
union of subgroups $H_k$ with  $H_k \cong H$.
Since $G$ is finitely generated, $\phi$ sends  $G$ into one of these copies of $H$, say $H_k$.
We will show $H_k$ is para-$G$.  Since $H \cong H_k$, this will suffice to prove  that $H$ is para-$G$.

To show that $G \to H_k$ induces an isomorphism on lower central series quotients,
first note that the following commutative diagram implies
that $G/\gamma_n(G) \to H_k/\gamma_n(H_k)$ is one-to-one for all $n$.
\[
\begin{diagram}\dgARROWLENGTH=1em
\node{G/\gamma_n(G)} \arrow{e,t}{\phi}\arrow{s,r}{\cong} \node{H_k/\gamma_n(H_k)}\arrow{s}\\
\node{\tau(G)/\gamma_n(\tau(G))} \arrow{e,t}{\cong} \node{\tau(H)/\gamma_n(\tau(H))}
\end{diagram}
\]

Let $B_k$ be the copy of $B$ in $H_k$. The homomorphism $H_k \to \tau(H)$ is given by $H_k = HB_k \to HB_S = \tau(H)$.  Thus,
\[
\qquad \qquad H_k/\gamma_2(H_k) = HB_k/B_k \cong H/H\cap B_k = H/H \cap B_S \cong HB_S/B_S = \tau(H)/\gamma_2(\tau(H)).
\]
Hence, $G/\gamma_2(G) \to H_k/\gamma_2(H_k)$ is an isomorphism, and in particular, onto.  This implies that $G/\gamma_n(G) \to H_k/\gamma_n(H_k)$ is onto for all $n$ since, as previously noted, any set of elements in a nilpotent group which generate it modulo its derived group generates the group itself.
It follows alomg the same lines in our precious discussions that
$\phi \colon G/\gamma_n(G) \cong H_k/\gamma_n(H_k)$ for all $n$. Consequently, $H_k$, and
hence $H$, is para-$G$.\\

\item If $H$ is para-$G$ then $\tau(G) \cong \tau(H)$.  So $\tau(H) \cong \tau(G)$ which implies $G$ is para-$H$.\\

\item This follows immediately from statement~(4) above and Magnus' Theorem~\ref{thm:Magnus}.
\end{enumerate}
\end{proof}

\section{Some consequences of the telescope theorem}\label{section:conseq}

\begin{theorem}\label{thm:finpres}
Suppose that $G$ and $H$ are $\M$-groups
and that $H$ is para-$G$. Then $H$ is finitely presented if and only if $G$ is finitely
presented.
\end{theorem}

We will need a theorem of Bieri and Strebel~\cite{BieriStrebel}. We recall the details.  Let $Q$ be
a finitely generated abelian group and let $v \in Hom(Q,\R)$. Define the submonoid
$Q_v=\{q \in Q\mid v(q) \geq 0\}$.
Now let $A$ be a finitely generated $\Z[Q]$-module. Then for every
$v \in Hom(Q,\R)$, $A$ can be viewed as
a $\Z[Q_v]$-module. $A$ is termed {\it tame} if for every
$v\in Hom(Q,\R)$ either $A$ is finitely generated as a $\Z[Q_v]$-module or else it is finitely
generated as a $\Z[Q_{-v}]$-module. The relevance of this is the following
theorem of Bieri and Strebel.

\begin{bs}
Suppose $Q$ is a finitely generated abelian group.
\begin{enumerate}
\item
If $A$ is a finitely generated tame
$\Z[Q]$-module, then every $Q$-submodule of $A$ is also tame.
\item
Suppose that $G$ is an
extension of an abelian normal subgroup $A$ by
$Q$. Then $G$ is finitely presented
if and only if $A$ is a tame $\Z[Q]$-module.
\end{enumerate}
\end{bs}

We are now in a position to prove Theorem~\ref{thm:finpres}.

\begin{proof}[Proof of Theorem~\ref{thm:finpres}]
Put $B=[H,H]$, $Q=H/[H,H]$. Since $H$ is para-$G$, there exists a homomorphism
$\phi$ from $G$ into $H$ which induces isomorphisms between the corresponding quotients of
their lower central series. So $\phi$ induces an isomorphism between $G/[G,G]$ and $Q$.
Since $H$ is para-$G$,  $\phi$ induces a monomorphism of $A=[G,G]$. It follows that $\phi(A)$
is a normal subgroup of $H$, i.e., a $Q$-submodule of the $Q$-module $B$.

Now suppose that $H$ is finitely presented. Then $B$ is tame and therefore so too is every
submodule of $B$, in particular $\phi(A)$. It follows that the module $A$ is also tame
which by the Bieri-Strebel Theorem implies that $G$ is finitely presented.

Conversely since para-equivalence is an equivalence relation, $G$ is para-$H$. So again
as already noted, if $G$ is finitely presented, so too is $H$.
\end{proof}

We now record another, simple consequence of the Telescope Theorem, for
polycyclic groups.

\begin{theorem}\label{thm:parapolycylic}
Suppose that $H$ is a finitely generated residually nilpotent
 metabelian group.  If $G$ is polycyclic and $H$ is para-$G$, then
 $H$ is isomorphic to a subgroup
of finite index in $G$ and $G$ is isomorphic to a subgroup of finite index in $H$.
\end{theorem}

\begin{proof} By the Telescope Theorem~\ref{thm:telescope}(5), $H$ is isomorphic to
a subgroup of $G$. It follows that the Hirsch number of $H$ is less than or equal to that of $G$.
But again by the Telescope Theorem~\ref{thm:telescope}(5), $G$ is isomorphic to a subgroup of $H$ and so the Hirsch
number of $G$ is less than or equal to that of $H$. It follows that $G$ and $H$ have the same
Hirsch numbers and hence the image of $H$ in $G$ and that of $G$ in $H$ are of finite index.
This completes the proof of Theorem~\ref{thm:parapolycylic}.
\end{proof}

We record and prove one last consequence of the Telescope Theorem.

\begin{theorem}\label{thm:parafree}
Let $G$ be an $\M$-group.
If $G$ has the same lower central quotients as a finitely generated
free metabelian group $F$, i.e., $G$ is para-free-metabelian,
then $G$ is isomorphic to a subgroup of $F$.
\end{theorem}

We will only sketch the proof of Theorem~\ref{thm:parafree}. Suppose that $G$ is freely generated
modulo $[G,G]$ by the set $g_1,\dots,g_n$ and that $F$ is freely generated by $x_1,\dots,x_n$.
Define a homomorphism $\phi$ from $F$ into $G$ by sending $x_j$ to $g_j$
for each $j$. Then $\phi$ induces isomorphisms between the corresponding quotients
of the lower central series of $F$ and $G$. Since free metabelian groups are residually
nilpotent, it follows that $G$ is para-$F$. Hence, by the Telescope Theorem~\ref{thm:telescope}(4), $F$ is
para-$G$. Consequently $G$ is isomorphic to a subgroup of $F$.
\qed \\

We note that there exist finitely generated para-free-metabelian groups which are not
free~\cite{Baumslag3}. We shall discuss the existence of more para-free metabelian  groups in
a separate paper.

\section{Poincar\'e series for finitely generated, metabelian groups}\label{section:poincare}

Our objective here is to prove Theorem~\ref{poincare}.  We will need the following
\begin{lemma}\label{lem:presentation}
Let $G$ be a finitely generated metabelian group given by a preferred presentation.
There is an algorithm to compute a finite presentation for
$G/\gamma_{n+1}(G)$ for every $n$.
\end{lemma}
In view of the fact that preferred presentations are not finite but involve
infinitely many relations that ensure that the derived group is abelian, some
care is needed to obtain a  finite  presentation of $G/\gamma_{n+1}(G)$.
To this end we first  recall the definition of a preferred presentation:

\begin{definition}
A  {\em preferred presentation} of a finitely generated metabelian group $G$, in the category of metabelian groups, is a presentation
which takes the form

\[
G=\langle g_1,\dots,g_t\ |\ R_1\cup R_2\rangle
\]
where
\begin{enumerate}
\item $R_1$ is a finite set of words of the form
\[
w=\prod_{1\leq i\leq j \leq   t}  [g_i,g_j]^{u_{ij}}\,
\]
and we use the usual notation $[x,y]$ for $x^{-1}y^{-1}xy$, $y^x$ for $x^{-1}yx$ and the $u_{ij}$ are words of the form
$g_1^{m_1}\dots g_t^{m_t}$;
\item $R_2$ is a finite set of words $uw$ where  $u$ has the form $g_1^{m_1}\dots g_t^{m_t}$ and
\[
w=\prod_{1\leq i\leq j\leq t }[g_i,g_j]^{v_{ij}}\, ,
\]
with $v_{ij}$ of the form $g_1^{n_1}\dots g_t^{n_t}$.\\
\end{enumerate}
\end{definition}
\noindent Thus the words in $R_1$,  together with the addition of all commutators  $[x,y]$ where $x$ and $y$ take
the form $[g_i,g_j]^{u_{ij}}$, are a regular  presentation of $A$ while those in $R_2$ read modulo $A$,
yield a finite presentation of $H = G/A$.

In order to obtain a presentation for $G$ {\em in the category of all groups}
we need to add to the relations $R_1$ and $R_2$ the set $R_3$ of all relations of the form
\[
[[w,x],[y,z]]=1,
\]
where $w,x,y,z$ range over
all words in the generators of $G$.
\begin{proof}[Proof of Lemma~\ref{lem:presentation}]
Our objective is to prove that there is an algorithm to obtain a finite
presentation of $G/\gamma_{c+1}(G)$. To this end we start out by finding
a finite presentation of a free nilpotent group of class $c$ on the generators
of $G$ and add the relations $R_1$ and $R_2$ to the group $H$ presented in this
way.  Now let $d_1,d_2,\dots,d_n, \dots$ be a recursive enumeration of
 the relators in $R_3$
and let $D_n$ be the normal closure in $H$ of $d_1,d_2,\dots,d_n$.
Then
$$D_1\leq D_2 \dots \leq D_n \dots $$
is an increasing sequence of normal subgroups of the finitely generated
nilpotent group $H$.  It follows that for some $m$, adding the finite set of relations $D_m$ will give a presentation for $G/\gamma_{c+1}(G)$.  There is an algorithm to determine for each $m$ whether $D_m = D_{m+1}$~\cite[Lemma~2.2]{BCR}.  Let $k$ be the smallest integer such that $D_k = D_{k+1}$.  Thus, adding the finite set of relations in $D_k$ gives the desired finite presentation for $G/\gamma_{c+1}(G)$
as desired, and completes the proof of the lemma.
\end{proof}

We continue our discussion concerning Theorem~\ref{poincare}.

We have proved that if $G$ is any finitely
generated metabelian group given by a preferred presentation,
then we can recursively enumerate fiinte presentations for the lower central series quotients. The basic commutators generate the subgroup $\gamma_c(G)/\gamma_{c+1}(G) \leq G/\gamma_{c+1}(G)$.  Since quotients of the derived group are submodule computable~(see, for instance,~\cite{BCM}), we can algorithmically generate a presentation for $\gamma_c(G)/\gamma_{c+1}(G)$ as an abelian group. Thus we can compute the rank of $\gamma_c(G)/\gamma_{c+1}(G)$.
Consequently we can recursively enumerate the rational Poincar\'e series
of a finitely generated metabelian group.

We are left for the proof of Theorem~\ref{poincare} to compute the denominator and numerator of  $P(G)$ in the case
where $G$ is a free metabelian group of finite rank. This is straightforward and we leave it as an exercise
for the reader.

\section{Completions of polycyclic groups}\label{section:completion}
We prove a sequence of Theorems and Lemmas culminating in the proof of the following

\begin{theorem}\label{thm:finitelygenerated}
Let the $\M$-group $G$ be polycyclic.
Then the pro-nilpotent completion of $G$, $\widehat{G}$, is locally polycyclic, i.e., its finitely generated
subgroups are polycyclic.
\end{theorem}

Our first Theorem may have independent interest.

\begin{theorem}\label{thm:2generators}
A finitely generated metabelian group is polycyclic if and only if
its two-generator subgroups are polycyclic.
\end{theorem}

\begin{proof}
Since subgroups of polycyclic groups are polycyclic, we only need prove one direction of this theorem.

Suppose  that the two-generator subgroups of the finitely
generated  metabelian group $G$ are polycyclic and that $G$ is
generated by $x_1,\dots,x_{\ell}$. Then the commutator subgroup
$A=[G,G]$ of $G$ is the normal closure of finitely many elements,
say $a_1,\dots,a_m$.  Since the subgroup of $G$ generated by $a_1$
and $x_1$ is polycyclic, so is the subgroup $A_1$
of $A$ which is generated by the conjugates of $a_1$ by the powers of $x_1$. Call these generators $a(1,1), \dots, a(1,n_1)$.  The subgroup of $A$ generated by the
conjugates of the finitely many elements $a(1,1), \dots, a(1,n_1)$
by the powers of $x_2$ is finitely generated as well. Iterating this
process we find that the subgroup $B$ of $A$ generated by the
conjugates of the elements $a_1, \dots, a_m$ by the finitely
many elements $x_1,\dots,x_{\ell}$ is finitely generated.

But $B=A$, the derived group of $G$. Consequently $G$ is an extension
of one finitely generated abelian  group by another and is
therefore polycyclic. This completes our proof.
\end{proof}

\begin{lemma}\label{lem:alphabeta}
Let $G$ be a polycyclic metabelian group.   Let $A$ be an abelian
normal subgroup of $G$ with abelian factor group $Q=G/A$. View $A$
as a module over the integral group ring $R$ of $Q$. Then for each
$t=sA\in Q, a \in A$, there exist polynomials
\[
\alpha=c_0+c_1t+\dots +c_{m-1}t^{m-1}-t^m
\]
and
\[
\beta=-t^{-1}+d_0+d_1t+\dots + d_{n-1}t^{n-1}
\]
such that
\[
a\alpha=a\beta=0.
\]
Moreover given two such polynomials $\alpha$ and $\beta$ if
$a\alpha=a\beta=0$, then it follows that the conjugates of $a$ by
the powers of $s$ generate an abelian group which can be generated
by  $m+n$ elements.
\end{lemma}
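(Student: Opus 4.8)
The plan is to deduce everything from the fact that a polycyclic group satisfies the maximal condition on subgroups, applied to the subgroup of $A$ swept out by the conjugates of $a$. Write $M$ for the subgroup of $A$ generated by the conjugates $s^{-k}as^{k}$, $k\in\Z$; in module notation, with $t=sA$ acting on the right, $M=\sum_{k\in\Z}\Z\,(at^{k})$. Since $G$ is polycyclic, its subgroup $M$ is finitely generated; being abelian, $M$ is thus a Noetherian $\Z$-module. (Right multiplication by $t$ maps $M$ onto $M$ and is injective, $t$ being a unit of $R$, so it restricts to an automorphism of the abelian group $M$; this is not needed for what follows but underlies the Cayley--Hamilton variant mentioned below.) The polynomials $\alpha$ and $\beta$ will be read off from ascending chain arguments inside $M$, and the final assertion will follow by manipulating the two relations.

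To obtain $\alpha$, consider the ascending chain $M_{0}\subseteq M_{1}\subseteq\cdots$, where $M_{j}=\Z a+\Z(at)+\cdots+\Z(at^{j})$. By the maximal condition it stabilizes; let $m\ge 1$ be least with $M_{m}=M_{m-1}$. Then $at^{m}\in M_{m-1}$, say $at^{m}=c_{0}a+c_{1}(at)+\cdots+c_{m-1}(at^{m-1})$ with $c_{i}\in\Z$, which is precisely $a\alpha=0$ for $\alpha=c_{0}+c_{1}t+\cdots+c_{m-1}t^{m-1}-t^{m}$. The polynomial $\beta$ is produced the same way from the chain $N_{j}=\Z a+\Z(at^{-1})+\cdots+\Z(at^{-j})$: if $n\ge 1$ is least with $N_{n}=N_{n-1}$, then $at^{-n}$ is a $\Z$-linear combination of $a,at^{-1},\dots,at^{-(n-1)}$, and multiplying that relation through by $t^{n-1}$ rewrites it as $a\beta=0$ with $\beta=-t^{-1}+d_{0}+d_{1}t+\cdots+d_{n-1}t^{n-1}$. (Equivalently, one may apply the Cayley--Hamilton theorem to the $\Z$-module endomorphisms ``$\cdot\, t$'' and ``$\cdot\, t^{-1}$'' of the finitely generated $\Z$-module $M$.)

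For the final statement, suppose $\alpha$ and $\beta$ of the stated shapes annihilate $a$. From $a\alpha=0$ we get $at^{m}\in\langle a,at,\dots,at^{m-1}\rangle$, and multiplying by $t$ and substituting repeatedly yields $at^{k}\in\langle a,at,\dots,at^{m-1}\rangle$ for all $k\ge 0$. From $a\beta=0$ we get $at^{-1}\in\langle a,at,\dots,at^{n-1}\rangle$, and multiplying by $t^{-1}$ and substituting repeatedly yields $at^{-k}\in\langle a,at,\dots,at^{n-1}\rangle$ for all $k\ge 1$. Hence $M$, generated by all the $at^{k}$ with $k\in\Z$, is already generated by the $m+n$ elements $at^{-n},\dots,at^{-1},a,at,\dots,at^{m-1}$ (in fact by $a,at,\dots,at^{\max(m,n)-1}$), which is the claim.

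The one step needing real care is the first: the argument rests on $M$ being finitely generated \emph{as an abelian group}, not merely as an $R$-module, which is exactly what polycyclicity of $G$ provides, and is also what makes the monic normalizations ``$-t^{m}$'' in $\alpha$ and ``$-t^{-1}$'' in $\beta$ available rather than just some nonzero relation. Everything after that is routine.
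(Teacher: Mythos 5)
Your proof is correct and takes essentially the same route as the paper's: both extract $\alpha$ and $\beta$ from the stabilization, guaranteed by the maximal condition in the polycyclic group $gp(a,s)$, of the ascending chains of subgroups generated by the positive (respectively negative) conjugates $a^{s^k}$. You additionally write out the substitution argument establishing the final ``$m+n$ generators'' claim, which the paper's proof leaves implicit.
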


\begin{proof} We construct $\alpha$ and $\beta$ is two steps.

\begin{enumerate}
\item Consider the subgroup $B_i$ of  the subgroup $gp(a,s)$ of
$G$  generated by
\[
a,a^s,\dots,a^{s^i}.
\]
Then since $gp(a,s)$ is polycyclic, it satisfies the maximal
condition, i.e. every subgroup is finitely generated. So there exists an integer $m$ such that $a^{s^m}\in
B_{m-1}$.  Hence there exists $c_0,\dots,c_{m-1}$ such that
\[
a^{s^m}=a^{c_0}+a^{c_1s}+\dots + a^{c_{m-1}s^{m-1}}.\]

Hence $a\alpha =0$ as claimed.\\

\item Consider the subgroup $C_j$
generated by
\[
a^{s^{-j} },a^{s^{-j}+1},\dots,a^{s^{-1}}.\]

Since $G$ satisfies the maximal condition there exists an integer
$n$ such that
\[
a^{s^{-n}}\in C_{n-1}.
\]
So there exist $d_0,d_1,\dots d_n$ such that
\[
a^{s^{-n}}=a^{d_0s^{-n+1}} + a^{d_1s^{-n+2}} +\cdots + a^{d_{n-1}}.\
\]
Since the action of $t$ on $A$ is by conjugation by $s$, we can
re-express what we have proved by writing $a\beta=0$ as claimed
\end{enumerate}
\end{proof}

Now let $G$ be an $\M$-group.  Our objective  is to prove that the finitely generated
subgroups of $\widehat{G}$ are polycyclic. In view of Theorem
\ref{thm:2generators} it suffices to prove that the two-generator
subgroups of $\widehat{G}$ are polycyclic. The following simple lemma
will facilitate the proof.

\begin{lemma}\label{lem:technical[s,a]}
Let $H$ be a metabelian group generated by the elements $s$ and
$a$. Then $H$ is polycyclic if the subgroup generated by
conjugates of $[s,a]$ by the powers of $s$ is finitely generated
and the subgroup generated by the  conjugates of $[s,a]$ by the
powers of $a$ is finitely generated.
\end{lemma}

\begin{proof}
Notice that $[H,H]$ is the normal closure in $H$ of $[s,a]$. Let
$h_1,\dots,h_m$ be a finite set of generators of the subgroup of
$H$ generated by the conjugates of $[s,a]$ by the powers of $s$.
Notice that $([s,a]^s)^a=([s,a])^a)^s$. So the subgroup  $K$ of
$H$ generated by the conjugates of the elements $h_1,\dots,h_m$ by
the powers of $a$ is again finitely generated.  But $K=[H,H]$. Thus
$H$ is an extension of one finitely generated abelian group by
another finitely generated abelian group and therefore polycyclic.
\end{proof}

To prove Theorem~\ref{thm:finitelygenerated}, we restrict our attention to the case of a two-generator metabelian
group $G$ since the general case follows along the same lines. If $G$ is a metabelian group, then so is $\widehat G$.  To prove that in $\widehat G$ the finitely generated subgroups are polycyclic, it suffices to show that the two-generator subgroups of $\widehat G$ are polycyclic by Theorem~\ref{thm:2generators}.  By Lemma~\ref{lem:technical[s,a]}, it suffices to show that if $s,a\in \widehat{G}$ and $H=gp(s,a)$, then the subgroups of $H$ generated by the conjugates of $b=[s,a]$ by $s$ is finitely generated, and similarly, the subgroup of $H$ generated by conjugates of $b$ by the powers of $a$ is finitely generated.

Toward this end, we have one last Lemma before we prove Theorem~\ref{thm:finitelygenerated}.

\begin{lemma}
Let $s\in \widehat{G}$ and let $b\in [\widehat{G},\widehat{G}]$. Then the subgroup
$B$ of $\widehat{G}$ generated by the conjugates of $b$ by the powers
of $s$ is a finitely generated abelian group.
\end{lemma}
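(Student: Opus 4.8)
The plan is as follows. Since $\widehat G$ is metabelian, $[\widehat G,\widehat G]$ is an abelian normal subgroup, so $B$ is automatically abelian and the only issue is finite generation. Write the group operation in $[\widehat G,\widehat G]$ additively and regard $[\widehat G,\widehat G]$ as a module over $\Z[\widehat G_{ab}]$; by Bousfield's theorem $\widehat G_{ab}\cong G_{ab}$, so this is a $\Z[G_{ab}]$-module on which a class in $G_{ab}$ acts by conjugation by any lift to $\widehat G$. Let $\bar s\in G_{ab}$ be the image of $s$. Then $B$ is precisely the $\Z[\bar s^{\pm1}]$-submodule generated by $b$, and it is enough to exhibit a polynomial $\psi(x)=x^{d}+e_{d-1}x^{d-1}+\dots+e_{0}\in\Z[x]$ which is \emph{monic} and has \emph{unit constant term} $e_{0}=\pm1$ and which satisfies $\psi(\bar s)\cdot b=0$. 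Granting this, the relation $\psi(\bar s)\cdot b=0$ reduces $b\cdot\bar s^{k}$ ($k\geq0$) to a $\Z$-linear combination of $b,b\bar s,\dots,b\bar s^{d-1}$ by an evident induction, and, since $e_{0}^{-1}=\pm1\in\Z$, it likewise reduces $b\cdot\bar s^{-k}$ ($k\geq1$); hence $B$ is generated as an abelian group by the $d$ elements $b,b\bar s,\dots,b\bar s^{d-1}$. This is the same linear-algebra argument that appears in the last clause of the preparation lemma (the one furnishing the polynomials $\alpha$ and $\beta$).

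To build $\psi$ we pass to the \emph{finitely generated} abelian group $M=[G,G]$; here is the one place we use that $G$ is polycyclic. The element $\bar s$ acts on $M$ as a group automorphism. Let $T\leq M$ be its torsion subgroup (finite) and $r=\rank(M)$. On $M/T\cong\Z^{r}$ the element $\bar s$ is an integral invertible matrix, so its characteristic polynomial $\chi(x)\in\Z[x]$ is monic of degree $r$ with constant term $\pm\det(\bar s\mid M/T)=\pm1$; by Cayley--Hamilton, $\chi(\bar s)M\subseteq T$. Since $\bar s$ acts on the finite group $T$ with finite order, $(\bar s^{N}-1)T=0$ for $N=|\mathrm{Aut}(T)|$. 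Therefore $\psi(x):=(x^{N}-1)\chi(x)$ is monic with constant term $\pm1$ and annihilates all of $M$.

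It remains to transfer this annihilation to $\widehat G$, which we do coordinatewise. Writing $b^{(n)}$ and $s^{(n)}$ for the coordinates of $b$ and $s$ in $G/\gamma_{n+1}(G)$, one has $b^{(n)}\in[\,G/\gamma_{n+1}(G),\,G/\gamma_{n+1}(G)\,]$, and this commutator subgroup is a quotient of the $\Z[G_{ab}]$-module $M$ via $G\twoheadrightarrow G/\gamma_{n+1}(G)$ (note $(G/\gamma_{n+1}(G))_{ab}=G_{ab}$), compatibly with the actions of $\bar s$. As $\psi(\bar s)M=0$, the same holds in every quotient of $M$, so $\psi(\bar s)\cdot b^{(n)}=0$ for all $n$; since $\widehat G$ sits inside $\prod_{n}G/\gamma_{n+1}(G)$ this yields $\psi(\bar s)\cdot b=0$, as required. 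The main obstacle the proof must overcome is exactly this point: the coordinates of $b$ live in ever-larger finite nilpotent quotients, yet a \emph{single} polynomial of bounded degree must annihilate $b$. This is forced because all those quotients are quotients of the one fixed finitely generated abelian group $[G,G]$, on which $\bar s$ acts through an integral matrix of bounded size --- the polycyclicity of $G$ is what keeps that size finite.
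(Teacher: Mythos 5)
Your proof is correct, and its skeleton matches the paper's: both arguments reduce to producing integral polynomial relations in $\bar s$ that annihilate the whole $\Z[G_{ab}]$-module $[G,G]$, then push these relations coordinatewise into $\prod_n G/\gamma_{n+1}(G)$ to conclude $\psi(\bar s)\cdot b=0$, with finite generation of $B$ following from the resulting linear recursion in both directions. Where you genuinely differ is in how the annihilator is manufactured. The paper invokes its preparation lemma: since $gp([x_1,x_2],s_1)$ is polycyclic it satisfies the maximal condition, so the ascending chains $B_i$ and $C_j$ of subgroups generated by positive (resp.\ negative) conjugates stabilize, yielding \emph{two} one-sided polynomials $\alpha$ (leading term $-t^{m}$) and $\beta$ (leading term $-t^{-n}$) killing the generator $[x_1,x_2]$; it then restricts to the two-generator case so that $[x_1,x_2]$ generates $[G,G]$ as a module. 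You instead use that $[G,G]$ is a finitely generated abelian group, apply Cayley--Hamilton to the action of $\bar s$ on $[G,G]/T$ (the constant term is $\pm1$ because the matrix lies in $\mathrm{GL}_r(\Z)$) and kill the finite torsion $T$ with $x^{N}-1$, getting a \emph{single} monic polynomial with unit constant term annihilating all of $[G,G]$. Your route buys uniformity: no reduction to two generators, no separate treatment of the degenerate case $s_1\in\gamma_2(G)$ (which the paper handles by noting $s$ and $b$ then commute), and an explicit generator count $d=r+N$ for $B$. The paper's route uses only the maximal condition rather than the structure theory of finitely generated abelian groups, and its two-polynomial bookkeeping is what feeds the ``moreover'' clause of its preparation lemma. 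Both hinge on exactly the point you identify at the end: every coordinate of $b$ is the image of the one fixed finitely generated module $[G,G]$, so a single annihilator of bounded degree works in all nilpotent quotients simultaneously.
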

\begin{proof}
Recall that we assume $G$ is generated by two elements, say, $x_1,x_2$. Then
$\g_n(G)/\g_{n+1}(G)$ is generated by the right-normed commutators
of the form
\[
[x_1, x_2, y_1, \dots,y_{n-2}]\g_{n+1}(G),
\]
where the $y_j\in \{x_1,x_2\}$.
Since $G$ is metabelian, so is $\widehat{G}$. So in order to prove that the finitely
generated subgroups of $\widehat{G}$ are polycyclic, it suffices to
prove that the two-generator  subgroups of $\widehat{G}$  are
polycyclic.

Let
\[
s(n)=s_1\dots s_n \g_{n+1}(G), \quad b(n)=b_1\dots b_n\g_{n+1}(G),
\]
where here $s_j\in \g_j(G),\,  b_j \in \g_j(G)$ for each $j$. If
$s_1 \in \g_2(G)$, then $s$ and $b$ commute and there is
nothing to prove.

We consider, then, the case where $s_1 \notin
\g_2(G)$. We need to consider the elements  $b_n$. To this
end, let us denote by $Y_n$ the set of  commutators of the form
$z(y_1,\dots,y_{n-2})=[x_1,x_2,y_1,\dots,y_{n-2}]$ of weight $n>1$.
We adopt the convention that if $n=2$, then $z=z(y_1,y_2,\dots,y_{n-2})=[x_1,x_2]$.

We proved in Lemma~\ref{lem:alphabeta}
that there exist two polynomials $\alpha, \beta$ in
$s_1,s_1^{-1}$, where
\[
\alpha=c_0+c_1s_1+\dots +c_{m-1}{s_1}^{m-1}-s_1^m
\]
and
\[
\beta=-{s_1}^{-1}+d_0 + d_1{s_1}+\dots+d_{n-1}{s_1}^{n-1}
\]
such that
\[[x_2,x_1]{\alpha}=[x_2,x_1]{\beta}=0.
\]
Now each $z(y_1, y_2, \ldots, y_{n-2})$ can be rewritten using exponential notation as
\[
[x_1,x_2]^{(y_1-1)\dots (y_{n-2}-1)}.
\]
So it follows that the action of $\alpha$ on $z(y_1, y_2, \ldots, y_{n-2})$ can be
re-expressed as follows:
\[([x_2,x_1]^{(y_1-1)\dots  (y_{n-2}-1)})^{\alpha}=[x_2,x_1]^{\alpha (y_1-1)\dots (y_{n-2}-1)}.
\]
So
\[
([x_2,x_1]^{(y_1-1)\dots  (y_{n-2}-1)})^{\alpha}=1.
\]
It follows that $b{\alpha}=0$ and similarly that $b{\beta}=0$.
Thus the conjugates of $b$ by the powers of $s$ generate a
finitely generated group. This completes the proof.
\end{proof}

We come now to the proof of Theorem~\ref{thm:finitelygenerated}.

\begin{proof}[Proof of Theorem~\ref{thm:finitelygenerated}]
The same proof used above can be used to prove that the conjugates of $b$ by
the powers of $a$ also generate a finitely generated group. So Lemma
\ref{lem:technical[s,a]} applies as noted above. Thus we have proved that if
$G$ is polycyclic, the two-generator subgroups of $\widehat{G}$ are
polycyclic and hence the finitely generated subgroups of $\widehat{G}$
are also polycyclic by Theorem~\ref{thm:2generators}, as claimed.
\end{proof}

\section{Examples}\label{section:examples}
We give here a number of examples of residually nilpotent metabelian groups with the same
lower central sequences and with a variety of different properties.

\subsection{Wreath products}
We recall that a group $W$ is the {\em (restricted) wreath product} of its subgroups $A$ and $T$,
denoted by $A \wr T$, if $W$ is  generated by $A$ and $T$ and
\begin{enumerate}
\item the conjugates $A^t$ of $A$ by the distinct  elements $t\in T$ generate a (restricted) direct product $B$, and
\item  $A \cap B =1$.
\end{enumerate}
So $W=B\rtimes T$. From now on we will refer to these products as direct products
 and wreath products.
By a  direct product we mean the group of elements of the cartesian product
$\prod_{t \in T} A^t$ where all but finitely many coordinates are the trivial element.
We will prove that such wreath products have a number of interesting properties.

We will restrict our attention here to a number of special cases.  It is likely that
most of our results hold more generally, but we will
not concern ourselves with greater generality here.

One can extend Theorem~\ref{thm:wreath}(1), below, to groups where $A$ is any finite abelian group, as stated in \S\ref{section:main}.  We prove the less general result here.

\begin{theorem}\label{thm:wreath}
Let $W=A \wr T$ be the wreath product of its subgroups $A$ and $T$.
Then the following
hold.
\begin{enumerate}
\item  If $A$ is of prime order and $T$ is infinite cyclic, then any finitely generated
residually nilpotent metabelian group $H$ with the same lower central sequences as
$W$ is isomorphic to $W$, i.e., the para-equivalence class of $W$ consists of a single
isomorphism class.
\item If $A$ is of prime order and $T$ is infinite cyclic, then
the Isomorphism Problem is solvable for $W$.
\item If $A$ and $T$ are infinite cyclic, then the Isomorphism Problem is solvable
for $W$.
\item If $A$ and $T$ are infinite cyclic then the para-equivalence class of
$W$ contains at least two non-isomorphic groups.
\end{enumerate}
\end{theorem}

These groups are residually nilpotent, as claimed in \S\ref{section:main}. We note, without proof, that the wreath product of a finite abelian group
and a free abelian group, is residually nilpotent. While harder to prove, the wreath product of two torsion-free abelian groups is residually nilpotent as well. We refer the reader to the papers by  Gruenberg~\cite{Gruenberg}, Lichtman~\cite{Lichtman} and
Hartley \cite{Hartley} where proofs can be found, or where the results described there can be used to prove them.

We are now in a position to prove the various parts of Theorem 9.1.  In the remainder of this section we will use the term {\em direct product} and the product notation to denote the previously discussed restricted product.

\begin{proof} We prove these statements sequentially.

\begin{enumerate}
\item Let $W=A \wr T$ be the wreath product of a cyclic group, $A$, of prime order $p$ generated by $a \in A$ and an infinite cyclic group, $T,$ generated by $t$.
We have already noted that $W$ is residually nilpotent. Our objective now is to prove that
any finitely generated, residually nilpotent,  metabelian group $H$
with the same lower central sequences as $W$ is isomorphic to $W$.
 The normal closure  $B$ of $a$ in $W$ is  the direct product of its subgroups
$gp(a_i)$, $i=1,2,\dots$, where $a_i=t^{-i}at^i$ is order $p$. Since each
of the $a_i$ is of order $p$
we can view $B$ as a module over the group ring  $\Z_p[T]$ of $T$ over the field
$\Z_p$ of of $p$ elements. Since the $a_i$ generate their direct product, $B$ is
a free $\Z_p[T]$-module.

 Since $H$ has the same lower central
sequences as $W$, $H/[H,H]$ is the direct product of an infinite cyclic
group on $u[H,H]$ and a group of order $p$ generated by $b[H,H]$. There is, for each $c$,
an isomorphism $\phi_{c+1}$ mapping  $H/\gamma_{c+1}(H)$ onto $W/\gamma_{c+1}(W)$.
This gives rise to a monomorphism $\phi$ between the respective  direct products
\[
\prod_{c=1}^{\infty}H/\gamma_{c+1}(H) \longrightarrow
\prod_{c=1}^{\infty}W/\gamma_{c+1}(W).
\]
Since $H$ is residually nilpotent, this monomorphism induces a monomorphism of $H$ into $\prod_{c=1}^{\infty}W/\gamma_{c+1}W$.
It follows that this induces a monomorphism of $[H,H]$ into  $\prod_{c=1}^{\infty}W/\gamma_{c+1}W$, where the finite order elements of the latter group have order $p$. So $[H,H]$ is an abelian group
of exponent $p$.

Now adjoin $b$ to $[H,H]$. One easily checks that the only torsion in $\prod W/\gamma_{c+1}(W)$ has order $p$.  Since $b$ is of finite order modulo $[H,H]$ and since $A$ is abelian of exponent $p$,it follows that $b$ has finite order, and since the above homomorphism is 1-1, $b$ has finite order $p$. It follows from the residual nilpotence of $H$ that $b$ commutes
with all of $[H,H]$. Otherwise there exist an element $h\in [H,H]$ such that
$gp(b,h)$ is not abelian. However $gp(b,h)$ is then a finite subgroup of $H$ and so is isomorphic
to a finite subgroup of $\prod_{c=1}^{\infty}W/\gamma_{c+1}W$. Now the torsion subgroup of $W$
can be expressed as an ascending union
of abelian groups of exponent $p$ so $gp(b,h)$ embeds into one of these subgroups in which
the images of $b$ and $h$ are independent, and hence, commute. This implies that $K = gp(b, [H,H])$
is a normal abelian subgroup of $H$ of exponent $p$.

So $K$ is a module over $\Z_p[H/K] \cong \Z_p[T]$.  Since this ring is a p.i.d., $K$ is a sum of cyclic modules.  In the event that the number of these cyclic summands is at least 2, then $H/[H,H]$
will be the direct product
of an infinite cyclic group and at least two groups of order $p$, which is not the case.

So there is only one cyclic submodule generated by an element, say $h$. Since the submodule of $K$
generated by $h$ has infinite rank as a vector space over $\Z_p$, it follows that it is isomorphic to
$ \Z_p[T]$. So $H \cong gp(h)\wr gp(s)$, where $sK$ generates $H/K$,
and so is isomorphic to $W$. This completes the proof of (1).

\item As above, we view $B$ as a module over $\Z_p[T]$.
$B$ is a free $\Z_p[T]$-module on a single element $a$. We start by finding a
preferred presentation of $W$~\cite[Page 185]{Lennox-Robinson}. Our objective
is to show that there is an algorithm which recursively enumerates
all preferred presentations of groups
isomorphic to $W$ and recursively enumerates all preferred presentations of groups which are not
isomorphic to $W$.

To this end let $V$ be a finitely generated metabelian group
defined by a given preferred presentation. At the outset we algorithmically check that $V$
is residually nilpotent~\cite[Cor. 9.2]{BCR}, otherwise the algorithm terminates.  There is an algorithm to find a finite module presentation for $[V, V]$,~\cite[Thm. 3.1]{BCR}, and since the word problem is solvable for the finitely generated metabelian group $V$, one can algorithmically determine if $[V, V]$ is trivial.  If $[V,V]$ is trivial, then $V \ncong W$ and the algorithm halts. Otherwise, we can find a non-trivial element $v\in [V,V]$. There is then an algorithm to compute a  preferred  presentation of  the centralizer $C$ of $[V,V]$ ~\cite[Thm. 6.1]{BCR}.  The group $C/[V,V]$ is a subgroup of $V/[V,V]$ and is therefor a finitely generated abelian group.  We can algorithmically check if it  is cyclic of order $p$. If not, then $V \ncong W$. So we assume $C/[V,V]$ is finite or order $p$.  Since $C$ if generated by $[V,V]$ and just one more generator, and since $C$ centralizes $[V,V]$ with a single additional generator, $C$ must be abelian.

Since $[V,V]\subset C$, $V/C$ is a finitely generated abelian group.  So we can also check algorithmically whether it is infinite cyclic.  If not, $V \ncong W$.  Otherwise, $V/C$ is infinite cyclic.  Since $C$ is a finitely generated module over $V/C$ and the word problem is is solved for finitely generated metabelian groups, we can algorithmically check to see if each module generator has order $p$, and therefor whether $C$ is abelian of  exponent $p$.  If not, $V \ncong W$.  If so, $C$ is a module over the mod-$p$ group ring $\Z_p[V/C] \cong \Z_p[T]$, a p.i.d.  Thus, $C$ is a sum of cyclic modules.  If $C$ has more than one summand, then as in part~$(1)$, $V/[V,V]$ has more than one cyclic summand of order $p$, and $V \ncong W$.  Otherwise, $C$ has only one summand as a $\Z_p[T]$-module.  Since $C$ has infinite rank as a vector space over $\Z_p$, $C$ is a free rank one module over $\Z_p[T]$.

We have shown that $V$ is generated by
an element $s$ which is of infinite
order modulo $C$, that $C$ is a free rank one $\Z_p[V/C]$-module on an element $b$.  Since the quotient homomorphism $V \to V/C$ splits with kernel $C$, it follows that $V \cong W$.

\item  Let $W=A \wr T$, where $A$ is the infinite cyclic group on $a$ and
$T$ is the infinite cyclic group on
$t$. The normal closure  $B$ of $a$ in $W$ is
freely generated by the conjugates $a^{t^i}$ of $a$ by the
powers of $t$. So if we view $B$ as a module over the integral group
ring $\Z[T]$ of $T$, then
$B$ is a free module on $a$. We start out then by finding a
preferred presentation of $W$. Our objective
is to show that there is an algorithm which recursively
enumerates all preferred presentations of groups
isomorphic to $W$ and recursively enumerates all preferred presentations of groups which are not
isomorphic to $W$.

To this end let $V$ be a
 group defined by a given preferred presentation. As in the last argument, there is an algorithm which computes  a module presentation of the
derived group of $V$  and a second algorithm
that computes a presentation for the centralizer $C$
of the derived group of $V$. As in the prior argument, another algorithm determines whether of not
$C$ is abelian. If $C$ is not abelian,
then $V$ is not isomorphic to $W$ and the algorithm comes to a halt.
So we suppose that $C$ is abelian.
This means that $C$ is an abelian normal
subgroup of $V$ containing the derived group of $V$. We can now
determine whether of not $V/C$ is infinite
cyclic. If it is not, then again $V \ncong W$ and the algorithm comes to an end.  Suppose then that $V/C$ is infinite
cyclic. View $C$ as
a module over $V/C$. There is now an algorithm to decide whether or not $C$ is projective. If it is not, then
$V \ncong W$. However if $V$ is projective then
by the Quillen-Suslin theorem~\cite{suslin},  $C$ is a free
module over the infinite cyclic group. If the rank of $C$ is
different from 1, then $V\ncong W$. On the other
hand if $C$ is free of rank 1, then $V \cong W$ and the algorithm terminates.
\item  Let $W=A \wr T$, where $A$ is infinite cyclic on $a$ and $T$ is infinite cyclic on $t$. Then
$W$ is residually nilpotent.  Put $H=gp((a^2)^t a^{-1},\   a^2a^{t^{-1}},\ t)$.
Then $(a^2)^ta^{-1}, a^2a^{t^{-1}}, t$
generate $W$ modulo $\gamma_2(W)$ and hence they generate $W$ modulo $\gamma_{c+1}(W)$ for every
$c$, i.e., $H\gamma_{c+1}(W)=W$ for every $c$. Consequently
\[
W/\gamma_{c+1}(W)=H\gamma_{c+1}(W)/\gamma_{c+1}(W)\cong H/H\cap \gamma_{c+1}(W).
\]
Since $\gamma_{c+1}(W)\geq \gamma_{c+1}(H)$ in view of the fact that
finitely generated nilpotent groups
are Hopfian, $H\cap \gamma_{c+1}(W)=\gamma_{c+1}(H)$. So we have proved that $H$ and $W$
have the same lower central sequences. As a consequence $W$ is para-$H$ since now it follows that
the inclusion of $H$ into $W$ induces isomorphisms between the factor groups $H/\gamma_{c+1}(H)$
and $W/\gamma_{c+1}(W)$. Thus, by the Telescope Theorem, $H$ and $W$ are para-equivalent.

However $H$ and $W$ are not isomorphic. To see that this is so,
suppose the contrary. Any isomorphism
from $H$ to $W$ will map the centralizer $C$ of any non-trivial element of $[H,H]$ isomorphically
onto the centralizer $D$ of a non-trivial element of $[W,W]$. If we view $C$ as a $\Z[T]$-module
and $D$ similarly also as a $\Z[T]$-module, then these modules must be isomorphic.
But $D$ is a cyclic module
and it is not hard to prove that $C$ is a two-generator module since the ideal
of $\Z[T]$ generated by $2t-1,2-t$ is not  a principal ideal of $\Z[T]$, i.e., the class number
of $\Z[T]$ is at least 2.
\end{enumerate}
\end{proof}

It is this approach that we will take in our third paper which takes advantage of class field
theory to construct a number of interesting examples of residually nilpotent, metabelian,
polycyclic groups whose para-equivalence classes are not singletons.

\subsection{Polycyclic metabelian groups with para-equivalence classes that are not singletons}\label{section:singletons}

Recall from the Telescope Theorem, Theorem 5.6,  that if $G \to H$ is a para-equivalence of residually nilpotent polycyclic metabelian
groups, then $G$ is isomorphic to a subgroup of finite index in $H$ and
$H$ is isomorphic to a subgroup of finite index in $G$.
This does not imply $G \cong H$ as the following theorem demonstrates.

We note that Grunewald and Segal proved the Isomorphism theorem in the special case of finitely generated nilpotent groups, and Segal completed the proof for polycyclic groups. See the complete proof of the solution of the isomorphism problem, which contains the joint work of Segal and Grunewald followed by that of Segal, in the book~\cite{Segal2}.

\begin{theorem}\label{thm:eg-polycyclic}
There exist finitely generated, residually nilpotent, para-equivalent polycyclic
meta-belian groups which are not isomorphic.

\end{theorem}

\begin{remark}
Ideal class theory inspired this example.  It's well known that
the ideal class group of $\Q(\zeta_{23})$ has order $3$ and is
generated by the non-principal ideal $(2, 1 + P) \subset
\Z[\zeta_{23}]$, where $P$ is the Gaussian period described in the
proof below.  (See, for instance,~\cite[page 86]{Marcus}.)
\end{remark}

\begin{proof}
Let $T$ be the infinite cyclic group on $t$. Consider the Dedekind domain
\[
\Z[\zeta_{23}] \cong \Z[T]/(N(t)) \text{ where } N(t) = \Sigma_{k=0}^{22} t^{k}.
\]
We view $\Z[\zeta_{23}]$ as a $\Z[T]$-module with $t$ acting on
$\Z[\zeta_{23}]$  by multiplication by $\zeta_{23}$. Let $G$ be the semi-direct
product of $\Z[\zeta_{23}]$ by $T$ using this action:
\[
G = \Z[\zeta_{23}] \rtimes T
\]

Observe that the augmentation $\epsilon$ from $\Z[T]$ onto $\Z$
 determines a commutative diagram
where $C_{23}$ denotes the cyclic group with $23$ elements.
\[
\begin{diagram}
\node{\Z[T]} \arrow{e,t}{\epsilon} \arrow{s,r}{q}\node{\Z}\arrow{s}\\
\node{\Z[\zeta_{23}]} \arrow{e} \node{C_{23}}
\end{diagram}
\]
Therefore if $p(t) \in \Z[T]$ and $p(1) = \pm 1$, then
$q(p(t)) \neq 0$.  Hence, since $\Z[\zeta_{23}]$ is an integral
domain, the set $S \subset \Z[\zeta_{23}]$, that is the
image of $1 + \ker \{\epsilon \colon \Z[T] \to \Z\}$ in
$\Z[\zeta_{23}]$, is a multiplicative set in $\Z[\zeta_{23}]$.  One easily checks that $G$ is
polycyclic, metabelian and residually nilpotent as well.

With the above observations in mind, we now construct a residually nilpotent group $H$, and a para-equivalence
$G \to H$ such that $G$ and $H$ are not isomorphic.

Consider the {\em Gaussian period}
\[
P = \Sigma_{k=1}^{11}\zeta_{23}^{k^{2}} = \zeta_{23} + \zeta_{23}^{2} + \zeta_{23}^{3} + \zeta_{23}^{4} + \zeta_{23}^{6} + \zeta_{23}^{8} + \zeta_{23}^{9} + \zeta_{23}^{12} + \zeta_{23}^{13} + \zeta_{23}^{16} + \zeta_{23}^{18} = \frac{-1 + \sqrt{-23}}{2},
\]
and the element
\[
1 + P = \frac{1 + \sqrt{-23}}{2} \subset \Z[\zeta_{23}].
\]
Consider the non-principal ideal $I$ of $\Z[\zeta_{23}]$ generated
by $2$ and $1+P$, and let
 $H$ be the semi-direct product of $I$ and  $T$ with $t$ acting on $I$
by multiplication by $\zeta_{23}$:
\[
H = I\rtimes T.
\]
(We leave it to the reader to check that this ideal is non-principal.) Observe that the inclusion of $I$ in $\Z[\zeta_{23}]$
induces an inclusion of  $H$ into  $G$.

We now construct an element $s \in S$ such that
the principle ideal of $\Z[\zeta_{23}]$ generated by $s$
is properly contained in $I$

Let
\[
p(t) =  2(1 + t + t^{2} + t^{3} + t^{4} + t^{6} + t^{8} + t^{9} + t^{12} + t^{13} + t^{16} + t^{18}) - N(t).
\]
Then
\[
p(t) \in 1 + I,
\]
since $p(1) = 1$.  Now let $s = p(\zeta_{23}) \in S$. Also,
\[
p(\zeta_{23}) = 2 + 2P = 1 + \sqrt{-23} \in \Z[\zeta_{23}].
\]

Now consider the following diagram where $\alpha$ is the homomorphism
such that $\alpha(1) = s$, that is $\alpha(1) = 2(1+P)$:
\[
\begin{diagram}
\node{\Z[\zeta_{23}]}\arrow{e,t}{\alpha}\node{(2, 1+P)}\arrow{e,t}{\subset}\node{\Z[\zeta_{23}]}\arrow{e,t}{\alpha}\node{(2, 1+P)}
\end{diagram}
\]
Each composition of two homomorphisms is given by multiplication
by $s \in S$.  So all the above inclusions are isomorphisms after inverting the multiplicative set $S$.  By Lemma~\ref{Lemma:structure thm}(5), all homomorphisms in this diagram induce
isomorphisms on I-adic quotients.

Therefore the homomorphisms in the corresponding diagram of groups induces isomorphisms on lower central series quotients,
\[
G \xrightarrow{\alpha \rtimes id}  H \subset G \xrightarrow{\alpha \times id} H
\]
and
\[
G \xrightarrow{\alpha \rtimes id} H
\]
is a para-equivalence.

However
\[
G = \Z[\zeta_{23}] \rtimes T \not\cong I \rtimes T = H,
\]
since $I$ is not a principal ideal in $\Z[\zeta_{23}]$,
 and therefore not isomorphic to $\Z[\zeta_{23}]$.
\end{proof}

\section{The groups $G_n=\langle a,t; a^t= a^n \rangle$}\label{section:gn}
We shall prove  that the $G_n$, $n \in \Z$, have a number of interesting properties, which we have collected together
in the following theorem.

\begin{theorem}\label{thm:Gn}
Consider the group $G_n$, $n \neq 2$,  defined as above.
\begin{enumerate}
\item If $G_n$ is residually nilpotent.
\item If $H$ is a residually nilpotent group with the same lower central sequences as $G_n$, then
$H \cong G_n$.
\item The Isomorphism Problem is solvable for $G_n$.
\end{enumerate}
\end{theorem}

\begin{proof} We prove these statements sequentially.
\begin{enumerate}
\item We will restrict our attention to the case where $n>2$ since the remaining
cases can be taken care of in much the same way.

Put $G=G_n$. Notice first that $a$ is of infinite order
since by a theorem of Magnus, Karrass and Solitar, $a^ta^{-n}$  is not a proper power
in the free group on $a$ and $t$~\cite{MKS}. Let $A$ be the normal closure in $G$ of $a$ and put
$a_{j}=t^jat^{-j}$ and $A_j=gp(a_j)$. Since $a^t=a^n$, it follows that $a_{j+1}^n=a_j$. So $A$
is generated by $a=a_0,a_1,\dots$ and is an ascending union of infinite cyclic groups.
Consequently $G$ is the semidirect product of the torsion-free abelian group $A$ and an infinite cyclic
group and is therefore metabelian.
 Now $[a,t]=a^{n-1}$ and hence
$\gamma_2(G)$ is the normal closure in $G$ of $a^{n-1}$ and $G/\gamma_2(G)=C_{\infty}\times C_{n-1}$, where
$C_{\infty}$ is an infinite cyclic group and $C_{n-1}$ is a cyclic group of order $n-1$.

If we now put $b_j=[a,\underbrace{t,\dots,t}_{j}]$ for $j=2, \dots$,
and $B_j=gp(b_j)$, then $b_{j+1}^{n-1}=b_j$ and $\gamma_{j}(G)=gp(B_j,B_{j+1},\dots)$. It follows that
$G/\gamma_{j+1}G= C_{\infty}\ltimes C_{(n-1)^j}$ and that $\gamma_{\omega}(G)=1$ and therefore $G$ is residually
nilpotent.\\

\item Suppose then that $H$ is a finitely generated, residually
nilpotent group with the same lower central sequences as $G$. Then
$H$ embeds into the unrestricted direct product $\tilde{H}$ of the
factor groups $H/\gamma_k(H)$. Consequently $H$ is isomorphic to a
subgroup of a direct product of metabelian groups and is therefore
metabelian.

By hypothesis, $H/\gamma_k(H) \cong G/\gamma_k(G)$. Let  $\phi_k$ be a homomorphism
mapping $H/\gamma_k(H)$ onto $G/\gamma_k(G)$ for each $k$. Choose $\tau \in H$ to be any element
of $H$ such that the image  of $\tau \gamma_3(H)$ under $\phi_3$ is $t\gamma_3(G)$ and choose
$\alpha \in H$ such that the image of $\alpha$ under $\phi_3$ is $a \gamma_3(G)$. Denote by
 $\tau_k$ the image of $\tau$ under $\phi_k$ and by $\alpha_k$ the image of $\alpha$ under
$\phi_k$. It follows that
$$\alpha_k^{\tau_k} \cong \alpha_k^n\,  \rm{modulo}\, \gamma_k(H),$$
for every $k$, i.e.,
$$(\alpha)^{\tau}(\alpha)^{-n} \in \gamma_k(H)$$
for every $k>2$. But the intersection of the $\gamma_k(H)$ is trivial. Therefore
$\alpha^{\tau}=\alpha^n$.

Since $G$ is torsion-free, so too is $\widehat{G}$, and using the
$\phi_k$ we can identify $H$ with a subgroup of $\widehat{G}$. It
follows that every abelian subgroup of $H$ is torsion-free.
 Since $H$ is finitely generated
we can supplement $(\alpha)^{n-1}=c_1$ with finitely many elements $c_2,\dots,c_{\ell}$
of $[H,H]$ which together with $\tau$ and $\alpha$
suffice to generate $H$. Now choose finitely many elements $d_1,\dots,d_j$ in $[H,H]$
which freely generate the subgroup of $[H,H]$ generated by $c_1,\dots,c_{\ell}$.  Since $H$ is residually nilpotent and has the same lower central sequence as $G$, it follows that  $\tau$ conjugates each of $d_1,\dots,d_j$ to their $n^{th}$ powers. It follows then from this
discussion that $H$ is generated by $\tau, \alpha, d_1,\dots,d_k$ and defined
by the relations which specify that the elements $\alpha,d_1,\dots,d_j$ commute,
that $\tau$ conjugates each of the $d_j$ into their $n^{th}$-powers and that
$\alpha^{\tau}=\alpha^n$. This implies that $H/[H,H]$ is a direct product
of an infinite cyclic group and $j+1$ cyclic groups of order $n-1$.  This implies that $j = 0$ and so $H \cong G$.\\

\item  Let $H$ now denote a finitely generated metabelian group
given by a preferred presentation.  We have to prove that there is an algorithm which
determines whether or not $H\cong G$, which we describe in stages, as in the proof above.

There is an algorithm that decides whether or not $H$ is
residually nilpotent. If not, then $H \ncong G$. Suppose  then that $H$ is residually
nilpotent. There is an algorithm that decides whether $H$ is torsion-free and also
whether $H$ is not abelian.  If $H$ is abelian or if $H$ contains a non-trivial element
of finite order, then $H \ncong G$. So we can proceed under the assumptions that $H$ is
not abelian and that $H$ is torsion-free. We now check to see if $H/[H,H]$ is the direct
product of an infinite cyclic group and a cyclic group of order $n-1$. If this is not the
case then $H\ncong G$. So we can assume that $H/[H,H]$ is such a direct product. Now choose
a non-trivial element $x \in [H,H]$. Then there is an algorithm to compute the centralizer
$C$ of $x$. We now check to see whether $C$ is abelian and whether $H/C$ is infinite
cyclic. If not, then $H \ncong G$. Suppose then that $C$ is abelian and $H/C$ is infinite
cyclic generated by, say, $xC$. We now view $C$ as a module over the integral group ring $R$ of
the infinite cyclic group generated by $x$. The $R$-module $C$ is a finitely generated
$R$-module. So we can find a finite set of elements $d_1,\dots, d_q$ of $C$ which generate
$C$ as an $R$-module.

If $C/[H,H]$ is not cyclic of order $n-1$ then $H \ncong G$. Let us
assume that $C/[H,H]$ is cyclic of order $n-1$. Choose an element $y\in C$ such that
$y[H,H]$ generates $C/[H,H]$. Then $y$ has order $n-1$ modulo $[H,H]$. Now there is
an algorithm to decide if the $R$
submodule $K$ of $[H,H]$ generated by $z=y^{n-1}$ is equal to $[H,H]$,~\cite[Cor. 5.3]{BCR}. If it is
not, then $H \ncong G$. Suppose then that $K=[H,H]$.  It follows that $H$ is generated
by $x$ and $y$, that $y^x=y^n$ and $K$ is an ascending union of infinite cyclic groups,
as needed.
\end{enumerate}
\end{proof}

\bibliographystyle{amsalpha}

\end{document}